\newcommand{\noun}[1]{\textsc{#1}}
  \theoremstyle{plain}
  \newtheorem{lem}{\protect\lemmaname}
\theoremstyle{plain}
\newtheorem{thm}{\protect\theoremname}
 \theoremstyle{definition}
  \newtheorem{example}{\protect\examplename}
  \theoremstyle{definition}
  \newtheorem{defn}{\protect\definitionname}
  \providecommand{\definitionname}{Definition}
  \providecommand{\examplename}{Example}
  \providecommand{\lemmaname}{Lemma}
\providecommand{\theoremname}{Theorem}
\begin{document}

\title{On the simultaneous approximation of coefficients of schlicht functions}

\author{Eberhard Michel}
\maketitle
\begin{center}
Sonnenblumenweg 5
\par\end{center}

\begin{center}
65201 Wiesbaden
\par\end{center}
\begin{abstract}
A modified version of the Hardy-Littlewood tauberian theorem is used
to prove under which conditions the moduli of the coefficients $|a(n)|/n$
of schlicht functions tend uniformly to their Hayman-indexes as $n\rightarrow\infty$. 
\end{abstract}

{\small{}AMS Subject Classification}\noun{\small{}: }{\small{}30C50
Coefficient problems of univalent and multivalent functions}\\
{\small{}Keywords: univalent function, Hardy-Littlewood tauberian
theorem, Hayman index, Bazilevich's Theorem, full mapping, schlicht
approximation}{\small \par}

In the sequel let $\mathbb{N}_{0}=\mathbb{N}\cup\{0\}$, let $\mathbb{D}$
denote the unit disk, let $\Delta=\{z\in\mathbb{C}:|z|>1\}$ and let
$S$ denote the set of schlicht functions that are univalent in $\mathbb{D}$.
A function $g:\Delta\rightarrow\mathbb{C}$, $g(z)=z+\sum_{n=0}^{\infty}b_{n}z^{-n}$
analytic and univalent in $\Delta$ is called a full mapping if the
complement of $g(\Delta)$ with respect to $\mathbb{C}$ has two-dimensional
Lebesgue-measure zero, the corresponding class is denoted by $\widetilde{\Sigma}$(for
further details see for instance \cite{1}, chapter 2). Suppose that
$f\in S$ is given by $f(z)=\sum_{n=0}^{\infty}a_{n}z^{n}$ and let
$0\leq\alpha\leq1$ denote its Hayman-index. The Hayman-index $\alpha$
of a schlicht function $f\in S$ is defined by the formula $\alpha=\underset{r\rightarrow1-}{\mathit{lim}}(1-r)^{2}M_{\infty}(r,f)$,
where $M_{\infty}(r,f)=\mathit{max}\{|f(z)|:|z|=r\}$. $f\in S$ is
said to be of slow growth if $\alpha=0$ and it is said to be of maximal
growth if $\alpha>0$. Then Hayman's regularity theorem asserts that
$|a_{n}|/n\rightarrow\alpha$ as $n\rightarrow\infty$, however by
a result of Shirokov(\cite{2}) $(|a_{n}|/n)$ may converge arbitrarily
slowly to $\alpha$ as $n\rightarrow\infty$. So the question arises
under which conditions the terms $|a_{n}|/n$ converge more regularly
to $\alpha$ as $n\rightarrow\infty$. In order to give an answer
to this question families of schlicht functions with certain properties
will be considered in the sequel and the tool mainly used will be
an extension of the Hardy-Littlewood tauberian theorem, which is introduced
in Lemma 1. It's proof will be given here since the author isn't aware
of any reference.
\begin{lem}
(Simultaneous tauberian approximation). Let $(f_{m})$, $f_{m}(z)=\sum_{k=0}^{\infty}a_{k}^{(m)}z^{k}$
denote a sequence of functions analytic in the unit disk with real
coefficients, that is $a_{k}^{(m)}\in\mathbb{R}$ for $k\in\mathbb{N}_{0}$,
$m\in\mathbb{N}$. Furthermore for $m\in\mathbb{N}$ let $s_{n}^{(m)}$,
$n\in\mathbb{N}_{0}$ be defined by $s_{n}^{(m)}=\sum_{k=0}^{n}a_{k}^{(m)}$
and suppose that \\
(i) there exist constants $0<K<\infty$ and $\alpha_{m}\in\mathbb{R}$,
$m\in\mathbb{N}$ such that $\underset{t\rightarrow1-}{\mathit{lim}}f_{m}(t)=\alpha_{m}$
and $|\alpha_{m}|<K$\\
(ii) $(f_{m})$ converges uniformly in $[0,1]$ as $m\rightarrow\infty$\\
(iii) there exists a constant $0<L<\infty$ such that $s_{n}^{(m)}-\alpha_{m}<L$
for every $m\in\mathbb{N}$, $n\in\mathbb{N}_{0}$\\
Then for every $\epsilon>0$ there exists a $N(\epsilon)\in\mathbb{N}$
such that 
\[
\left|\alpha_{m}-\frac{1}{n+1}\sum_{k=0}^{n}(n+1-k)a_{k}^{(m)}\right|=\left|\alpha_{m}-\frac{1}{n+1}\sum_{k=0}^{n}s_{k}^{(m)}\right|<\epsilon
\]
whenever $m,n>N(\epsilon)$.
\end{lem}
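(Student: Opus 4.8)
The plan is to read Lemma 1 as a \emph{uniform} version of the Hardy--Littlewood Tauberian theorem and to run Karamata's proof of the latter while keeping track of the dependence on $m$. Since $s_{n}^{(m)}-\alpha_{m}<L$ by (iii), the numbers $y_{n}^{(m)}:=\alpha_{m}+L-s_{n}^{(m)}$ are strictly positive, and from $\sum_{k=0}^{n}s_{k}^{(m)}=\sum_{k=0}^{n}(n+1-k)a_{k}^{(m)}$ one has
\[
\frac{1}{n+1}\sum_{k=0}^{n}s_{k}^{(m)}=(\alpha_{m}+L)-\frac{1}{n+1}\sum_{k=0}^{n}y_{k}^{(m)},
\]
so it is enough to prove that $\frac{1}{n+1}\sum_{k=0}^{n}y_{k}^{(m)}\to L$ as $n\to\infty$, uniformly in $m$. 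Because $f_{m}$ is analytic in $\mathbb{D}$ the series $\sum_{n}y_{n}^{(m)}t^{n}$ converges for $0\le t<1$, and from $\sum_{n}s_{n}^{(m)}t^{n}=(1-t)^{-1}f_{m}(t)$ one obtains the basic identity
\[
(1-t)\sum_{n=0}^{\infty}y_{n}^{(m)}t^{n}=\alpha_{m}+L-f_{m}(t)\qquad(0\le t<1).
\]

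The crucial uniform ingredient is that $f_{m}(t)\to\alpha_{m}$ as $t\to1-$ uniformly in $m$. To see this, extend each $f_{m}$ to $[0,1]$ by $f_{m}(1):=\alpha_{m}$, which is legitimate by (i); hypothesis (ii) then exhibits $(f_{m})$ as a uniformly convergent sequence of continuous functions on the compact interval $[0,1]$, hence an equicontinuous family, and equicontinuity at the point $1$ says precisely that $\sup_{m}|f_{m}(t)-\alpha_{m}|\to0$ as $t\to1-$. Replacing $t$ by $t^{j+1}$ in the basic identity gives, for every integer $j\ge0$,
\[
(1-t)\sum_{n=0}^{\infty}y_{n}^{(m)}t^{n}(t^{n})^{j}=\frac{\alpha_{m}+L-f_{m}(t^{j+1})}{1+t+\dots+t^{j}},
\]
whose limit as $t\to1-$ is $L/(j+1)$; the denominator tends to $j+1$ independently of $m$ and the numerator (which is bounded, using $|\alpha_{m}|<K$ and the uniform bound on the $f_{m}$) tends to $L$ uniformly in $m$ by the previous remark, so this limit is attained uniformly in $m$. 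Taking finite linear combinations, for every polynomial $P$ we obtain $(1-t)\sum_{n}y_{n}^{(m)}t^{n}P(t^{n})\to L\int_{0}^{1}P(u)\,du$ as $t\to1-$, uniformly in $m$.

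Now I would carry out Karamata's sandwich, which is the step where non-negativity of $y_{n}^{(m)}$ --- that is, the uniformity of hypothesis (iii) --- is essential. Fix the bounded Riemann-integrable function $g(u)=u^{-1}\mathbf{1}_{[1/e,1]}(u)$, for which $t^{n}g(t^{n})=\mathbf{1}_{\{n\le 1/|\log t|\}}$ and $\int_{0}^{1}g=1$, and given $\eta>0$ choose polynomials with $P_{1}\le g\le P_{2}$ on $[0,1]$ and $\int_{0}^{1}(P_{2}-P_{1})<\eta$. Since $y_{n}^{(m)}t^{n}\ge0$, the previous paragraph traps $\liminf_{t\to1-}$ and $\limsup_{t\to1-}$ of $(1-t)\sum_{n}y_{n}^{(m)}t^{n}g(t^{n})$ between $L\int_{0}^{1}P_{1}$ and $L\int_{0}^{1}P_{2}$, uniformly in $m$; letting $\eta\to0$ yields $(1-t)\sum_{n\le 1/|\log t|}y_{n}^{(m)}\to L$ uniformly in $m$. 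Writing $N=\lfloor 1/|\log t|\rfloor$ and using both $(1-t)(N+1)\to1$ (a relation in which $m$ does not occur) and the a priori bound $\sum_{n\le N}y_{n}^{(m)}=O(N)$ that the same estimate supplies, this turns into $\frac{1}{N+1}\sum_{n=0}^{N}y_{n}^{(m)}\to L$ as $N\to\infty$, uniformly in $m$. Inserting this in the identity of the first paragraph gives $\frac{1}{n+1}\sum_{k=0}^{n}s_{k}^{(m)}\to\alpha_{m}$ as $n\to\infty$, uniformly in $m$; letting $N(\epsilon)$ be the threshold in $n$ furnished by this uniform convergence, the asserted inequality holds in particular whenever $m,n>N(\epsilon)$.

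The analytic substance is entirely classical --- it is Karamata's proof of the Hardy--Littlewood theorem --- so the real work is bookkeeping: one must make sure that every limit invoked (the equicontinuity estimate, the polynomial moments, and the passage from $t\to1-$ to $N\to\infty$) is uniform in $m$, and that the sandwich inequality, which is only asserted termwise for each fixed $m$, does not destroy this uniformity. I expect the main point to get right to be exactly this interplay --- carrying the uniform-in-$m$ control through the single non-uniform limit $t\to1-$, which is where a crude argument would break down.
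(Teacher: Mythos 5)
Your proposal is correct and is essentially the paper's own argument: both run Karamata's proof of the Hardy--Littlewood theorem uniformly in $m$, using the same kernel $u^{-1}\mathbf{1}_{[1/e,1]}(u)$, the same polynomial sandwich made valid by the one-sided bound (iii), the uniform-in-$m$ Abel limit extracted from hypothesis (ii), and evaluation at $t\approx 1-\tfrac{1}{n+1}$. The only (cosmetic) differences are that you normalize to the non-negative quantities $y_{n}^{(m)}=\alpha_{m}+L-s_{n}^{(m)}$ and obtain the uniform Abel limit via equicontinuity, whereas the paper keeps $s_{n}^{(m)}-\alpha_{m}$ and compares all $g_{m}$ with a single $g_{M}$.
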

\begin{proof}
Let $\epsilon>\text{0}$ and consider $g_{m}(t)=f_{m}(t)-\alpha_{m}=(1-t)\sum_{n=0}^{\infty}(s_{n}^{(m)}-\alpha_{m})t^{n}$,
$m\in\mathbb{N}$. Define $\lambda:[0,1]\rightarrow\mathbb{R}$ by
\[
t\lambda(t)=\begin{cases}
0 & \mathit{if}\;0\leq t<e^{-1}\\
1 & \mathit{if}\;e^{-1}\leq t\leq1
\end{cases}.
\]
Then by the Weierstrass-Approximation-Theorem(see \cite{3}, chapter
7.53) there exist polynomials $p_{\epsilon}$ , $P_{\epsilon}$ such
that $p_{\epsilon}(t)<\lambda(t)<P_{\epsilon}(t)$ and
\begin{eqnarray}
\int_{0}^{1}P{}_{\epsilon}(t)-p_{\epsilon}(t)\,dt & < & \epsilon L^{-1}.
\end{eqnarray}
Now let $P_{\epsilon}(t)-p_{\epsilon}(t)=\sum_{j=0}^{\nu}d_{j}t^{j}$,
$d_{j}\in\mathbb{R}$, $j=0,..,\nu$. Then on one hand
\begin{equation}
(1-t)\,\sum_{k=0}^{\infty}t^{k}\left(P_{\epsilon}(t^{k})-p_{\epsilon}(t^{k})\right)=(1-t)\,\sum_{j=0}^{\nu}d_{j}\sum_{k=0}^{\infty}(t^{j+1})^{k}=\sum_{j=0}^{\nu}d_{j}\frac{1-t}{1-t^{j+1}}
\end{equation}
and on the other hand 
\begin{eqnarray}
\sum_{j=0}^{\nu}d_{j}\frac{1-t}{1-t^{j+1}} & \rightarrow & \sum_{j=0}^{\nu}\frac{d_{j}}{j+1}=\int_{0}^{1}P_{\epsilon}(t)-p_{\epsilon}(t)\,dt
\end{eqnarray}
as $t\rightarrow1$-. Hence by (2) and (3) there exists a $T_{1}(\epsilon)$
with $0<T_{1}(\epsilon)<1$ such that
\begin{equation}
\left|(1-t)\,\sum_{k=0}^{\infty}t^{k}\left(P_{\epsilon}(t^{k})-p_{\epsilon}(t^{k})\right)-\int_{0}^{1}P_{\epsilon}(t)-p_{\epsilon}(t)\,dt\right|<\epsilon L^{-1}
\end{equation}
 if $1\geq t>T_{1}(\epsilon)$. But by (1),(4) and (iii) 
\begin{eqnarray*}
(1-t)\sum_{k=0}^{\infty}(s_{k}^{(m)}-\alpha_{m})\left(t^{k}\lambda(t^{k})-t^{k}p_{\epsilon}(t^{k})\right) & < & (1-t)\sum_{k=0}^{\infty}Lt^{k}\left(\lambda(t^{k})-p_{\epsilon}(t^{k})\right)\\
 & < & (1-t)\,L\sum_{k=0}^{\infty}t^{k}\left(P_{\epsilon}(t^{k})-p_{\epsilon}(t^{k})\right)<2\epsilon
\end{eqnarray*}
and
\begin{eqnarray*}
(1-t)\sum_{k=0}^{\infty}(s_{k}^{(m)}-\alpha_{m})\left(t^{k}P_{\epsilon}(t^{k})-t^{k}\lambda(t^{k})\right) & < & (1-t)\sum_{k=0}^{\infty}Lt^{k}\left(P_{\epsilon}(t^{k})-\lambda(t^{k})\right)\\
 & < & (1-t)\,L\sum_{k=0}^{\infty}t^{k}\left(P_{\epsilon}(t^{k})-p_{\epsilon}(t^{k})\right)<2\epsilon
\end{eqnarray*}
if $1\geq t>T_{1}(\epsilon)$. The last two inequalities imply that
\begin{equation}
(1-t)\sum_{k=0}^{\infty}c_{k}^{(m)}t^{k}P_{\epsilon}(t^{k})-2\epsilon<(1-t)\sum_{k=0}^{\infty}c_{k}^{(m)}t^{k}\lambda(t^{k})<(1-t)\sum_{k=0}^{\infty}c_{k}^{(m)}t^{k}p_{\epsilon}(t^{k})+2\epsilon
\end{equation}
where $c_{k}^{(m)}=s_{k}^{(m)}-\alpha_{m}$, $k\in\mathbb{N}_{0}$,
$m\in\mathbb{N}$ if $1\geq t>T_{1}(\epsilon)$. The next step is
to show that the terms on the left hand side and on the right hand
side of (5) involving $P_{\epsilon}$ and $p_{\epsilon}$ are smaller
than $\epsilon$ if $t\in(0,1)$ is chosen large enough. This will
be shown for an arbitrarily chosen polynomial $P$, so that it will
also hold for $P_{\epsilon}$ and $p_{\epsilon}$. Let $P$ be defined
by $tP(t)=\sum_{j=1}^{\mu}b_{j}t^{j}$ and show that there exist constants
$T(\epsilon)>0$ and $M\in\mathbb{N}$(dependent on $P$ and $\epsilon$)
such that 
\begin{equation}
(1-t)\left|\sum_{k=0}^{\infty}(s_{k}^{(m)}-\alpha_{m})\,t^{k}P(t^{k})\right|<2\epsilon
\end{equation}
whenever $T(\epsilon)<t^{\mu}<1$ and $m>M$. In oder to prove (6)
observe that by hypothesis the sequence $(g_{m})$ converges uniformly
on $[0,1]$. Hence there exists a $M\in\mathbb{N}$ such that for
$0\leq t\leq1$ 
\begin{equation}
\left|g_{M}(t)-g_{m}(t)\right|<\mathit{min}\left\{ \epsilon\left(\sum_{i=1}^{\mu}\left|b_{i}\right|\right)^{-1},\epsilon\right\} 
\end{equation}
if $m>M$. But since $g_{M}$ is continuous on $[0,1]$ by (i) and
since $g_{M}(1)=0$ there exists a $T(\epsilon)>0$ such that 
\begin{equation}
\left|g_{M}(t^{j})\right|<\mathit{min}\left\{ \epsilon\left(\sum_{i=1}^{\mu}\left|b_{i}\right|\right)^{-1},\epsilon\right\} 
\end{equation}
if $T(\epsilon)<t^{\mu}\leq1$ for $j=1,..,\mu$. Consequently by
(8)
\begin{equation}
\sum_{j=1}^{\mu}\left|b_{j}\right|\left|g_{M}(t^{j})\right|<\epsilon
\end{equation}
and (7) implies that 
\begin{equation}
\sum_{j=1}^{\mu}\left|b_{j}\right|\left|g_{M}(t^{j})-g_{m}(t^{j})\right|<\epsilon
\end{equation}
whenever $T(\epsilon)<t^{\mu}\leq1$ and $m>M$. Now, let $t\in(0,1)$
be chosen arbitrarily, let $y_{j}$ be defined by $y_{j}=t^{j}$ for
$j=1,..,\mu$ and observe that the series $\sum_{k=0}^{n}(s_{k}^{(m)}-\alpha_{m})\,y_{j}^{k}$
converge as $n\rightarrow\infty$ for each $m\in\mathbb{N}$ and $j=1,..,\mu$.
Then, by the usual algebra of addition and multiplication of convergent
series(\cite{4},Theorem 3.47),
\begin{eqnarray*}
(1-t)\left|\sum_{k=0}^{\infty}(s_{k}^{(m)}-\alpha_{m})\,t^{k}P(t^{k})\right| & = & (1-t)\left|\sum_{k=0}^{\infty}(s_{k}^{(m)}-\alpha_{m})\sum_{j=1}^{\mu}b_{j}\,(t^{k})^{j}\right|\\
 & = & (1-t)\left|\sum_{k=0}^{\infty}\sum_{j=1}^{\mu}b_{j}(s_{k}^{(m)}-\alpha_{m})\,y_{j}^{k}\right|
\end{eqnarray*}
\begin{eqnarray*}
 & = & (1-t)\left|\sum_{j=1}^{\mu}b_{j}\sum_{k=0}^{\infty}(s_{k}^{(m)}-\alpha_{m})\,y_{j}^{k}\right|\\
 & \leq & \sum_{j=1}^{\mu}\left|b_{j}\right|(1-t^{j})\left|\sum_{k=0}^{\infty}(s_{k}^{(m)}-\alpha_{m})\,(t^{j})^{k}\right|\\
 & = & \sum_{j=1}^{\mu}\left|b_{j}\right|\left|g_{m}(t^{j})\right|.
\end{eqnarray*}
However (9), (10) and the triangle inequality imply that $\sum_{j=1}^{\mu}\left|b_{j}\right|\left|g_{m}(t^{j})\right|<2\epsilon$
whenever $T(\epsilon)<t^{\mu}\leq1$ and $m>M$, which proves (6).
Hence, by (6) there exist constants $0<T_{2}(\epsilon)<1$ and $N_{1}(\epsilon)\in\mathbb{N}$
such that the inequality (6) will hold simultaneously for both $p_{\epsilon}$
and $P_{\epsilon}$ if $t>T_{2}(\epsilon)$ and if $m>N_{1}(\epsilon)$.
And the inequality (5) together with the inequality (6)(applied to
$p_{\epsilon}$ and $P_{\epsilon}$) yields 
\begin{equation}
(1-t)\left|\sum_{k=0}^{\infty}(s_{k}^{(m)}-\alpha_{m})t^{k}\lambda(t^{k})\right|<4\epsilon
\end{equation}
if $\mathit{max\{}T_{1}(\epsilon),T_{2}(\epsilon)\}<t\leq1$ and if
$m,n>N_{1}(\epsilon)$. Finally, in order to complete the proof of
Lemma 1 choose the sequence $t_{n}=1-(n+1)^{-1}$ and observe that
\begin{equation}
(1-\frac{1}{n+1})^{n+1}<e^{-1}<(1-\frac{1}{n+1})^{n}
\end{equation}
 for each $n\in\mathbb{N}$. Then there exists a $N_{2}(\epsilon)\in\mathbb{N}$
such that $\mathit{max\{}T_{1}(\epsilon),T_{2}(\epsilon)\}<t_{n}<1$
if $n>N_{2}(\epsilon)$ and (11), (12) and the definition of $t\lambda(t)$
applied to the sequence $(t_{n})$ imply that
\[
\frac{1}{n+1}\left|\sum_{k=0}^{n}\left(s_{k}^{(m)}-\alpha_{m}\right)\right|<4\epsilon
\]
if $m,n>\mathit{max}\{N_{1}(\epsilon),N_{2}(\epsilon)\}$. 
\end{proof}
The next result will be used several times in the sequel and formulated
as a lemma here.
\begin{lem}
Let $(g_{n})$, $g_{n}:[0,1]\rightarrow\mathbb{R}$ be a sequence
of uniformly bounded functions such that each $g_{n}$ is non-increasing
in $[0,1]$ and suppose that there exists a function $g:[0,1]\rightarrow\mathbb{R}$
such that for each $r\in[0,1]$ $g_{n}(r)\rightarrow g(r)$ as $n\rightarrow\infty$.
Then $\underset{r\rightarrow1-}{\mathit{lim}}g(r)\geq\underset{n\rightarrow\infty}{\mathit{lim}}g_{n}(1)=g(1)$.
Furthermore, if $g:[0,1]\rightarrow\mathbb{R}$ is continuous then
$(g_{n})$ converges uniformly in $[0,1]$ to $g$ as $n\rightarrow\infty.$
\end{lem}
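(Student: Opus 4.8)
The plan is to first transfer monotonicity from the sequence to its pointwise limit, and then treat the two assertions separately: the inequality $\lim_{r\to1-}g(r)\ge g(1)$ will follow softly from monotonicity of $g$, while the uniform convergence statement is a P\'olya-type strengthening of pointwise convergence that replaces the monotonicity-in-$n$ hypothesis of Dini's theorem by the available monotonicity-in-$r$ of each $g_{n}$.

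\emph{Step 1 (monotonicity and boundedness of the limit).} If $0\le r_{1}<r_{2}\le1$ then $g_{n}(r_{1})\ge g_{n}(r_{2})$ for every $n$, and letting $n\to\infty$ gives $g(r_{1})\ge g(r_{2})$; hence $g$ is non-increasing on $[0,1]$, and it is bounded because the $g_{n}$ are uniformly bounded. Consequently $\lim_{r\to1-}g(r)$ exists and equals $\inf_{0\le r<1}g(r)$. Since $g(r)\ge g(1)$ for every $r<1$, this infimum is at least $g(1)$, while $g(1)=\lim_{n\to\infty}g_{n}(1)$ is merely the assumed pointwise convergence at $r=1$. This proves the first assertion.

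\emph{Step 2 (from pointwise to uniform when $g$ is continuous).} Fix $\epsilon>0$. Since $g$ is continuous on the compact interval $[0,1]$ it is uniformly continuous, so I can choose nodes $0=x_{0}<x_{1}<\dots<x_{p}=1$ with $g(x_{i})-g(x_{i+1})<\epsilon$ for $i=0,\dots,p-1$; here monotonicity of $g$ makes this difference non-negative and equal to the oscillation of $g$ on $[x_{i},x_{i+1}]$. As there are only finitely many nodes and $g_{n}(x_{i})\to g(x_{i})$ for each of them, there is an $N$ with $|g_{n}(x_{i})-g(x_{i})|<\epsilon$ for all $i$ and all $n>N$. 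Now take an arbitrary $r\in[0,1]$, say $r\in[x_{i},x_{i+1}]$; monotonicity of $g_{n}$ gives $g_{n}(x_{i+1})\le g_{n}(r)\le g_{n}(x_{i})$, and combining this with the node estimates and with $0\le g(x_{i})-g(r)\le g(x_{i})-g(x_{i+1})<\epsilon$ and $0\le g(r)-g(x_{i+1})\le g(x_{i})-g(x_{i+1})<\epsilon$ sandwiches $g_{n}(r)$ between $g(r)-2\epsilon$ and $g(r)+2\epsilon$. Hence $\sup_{r\in[0,1]}|g_{n}(r)-g(r)|\le2\epsilon$ for $n>N$, which is the asserted uniform convergence.

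I do not anticipate a genuine obstacle. The one point deserving attention is that Dini's theorem is \emph{not} directly applicable, since nothing is assumed about the behaviour of $g_{n}(r)$ as a function of $n$; the device that replaces it is to use uniform continuity of the limit $g$ to discretize $[0,1]$ into finitely many points, and then to let the monotonicity of each individual $g_{n}$ in the variable $r$ propagate the control obtained at those points to the whole interval.
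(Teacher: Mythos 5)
Your proof is correct and follows essentially the same route as the paper's: monotonicity passes to the limit, the first assertion follows from monotonicity of $g$, and uniform convergence is obtained by discretizing $[0,1]$ via uniform continuity of $g$ and using the monotonicity of each $g_{n}$ to propagate the pointwise estimates at the nodes to the whole interval. No gaps.
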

\begin{proof}
The inequality of Lemma 2 is a trivial consequence of properties of
monotonic functions. Let $\epsilon>0$ be chosen arbitrarily. It is
clear that the limit-function $g:[0,1]\rightarrow\mathbb{R}$ is non-increasing
in $[0,1]$. Since $g:[0,1]\rightarrow\mathbb{R}$ is uniformly continuous
there exist real numbers $0=t_{0}<t_{1}<..<t_{m}=1$ such that $0\leq g(t_{k-1})-g(t_{k})<\epsilon$
for each $k=1,..,m$. Furthermore there exists a $N(\epsilon)\in\mathbb{N}$
such that for each $k=0,1,..,m$ $|g_{n}(t_{k})-g(t_{k})|<\epsilon$
whenever $n>N(\epsilon)$. Let $t\in[0,1]$ be chosen arbitrarily,
say $t_{k-1}<t<t_{k}$ for some $1\leq k\leq m$. Then, since $g$
and $g_{n}$, $n\in\mathbb{N}$ are non-inceasing, the two properties
mentioned above yield 
\[
g(t)-g_{n}(t)<\epsilon+g(t_{k-1})-g_{n}(t)\leq\epsilon+g(t_{k-1})-g_{n}(t_{k-1})<2\epsilon
\]
and 
\[
g(t)-g_{n}(t)>g(t_{k})-g_{n}(t)-\epsilon\geq g(t_{k})-g_{n}(t_{k})-\epsilon>-2\epsilon
\]
if $n>N(\epsilon)$. This proves the lemma.
\end{proof}
In order to introduce the first theorem consider the sequence $(f_{m})$,
$f_{m}(z)=r_{m}^{-1}k(r_{m}z)$ where $k$ denotes the Koebe-function
and $(r_{m})$,$m\in\mathbb{N}$ denotes a sequence such that $0<r_{m}<1$
and $r_{m}\rightarrow1$ as $m\rightarrow\infty$. Then each $f_{m}$
of the sequence has Hayman-index $0$, yet the sequence $(f_{m})$
converges locally uniformly in $\mathbb{D}$ to the limit-function
$k$ which has Hayman-index $1$. Therefore, on the one hand $|a_{n}(f_{m})|/n\rightarrow0$
as $n\rightarrow\infty$ for each $f_{m}$ by Hayman's regularity
theorem, but on the other hand $|a_{n}(f_{m})|/n\rightarrow1$ as
$m\rightarrow\infty$ because the sequence $(f_{m})$ converges locally
uniformly in $\mathbb{D}$ to the Koebe-function. So, if the seqence
($f_{m})$, $f_{m}\in S$ converges locally uniformly in $\mathbb{D}$
to a schlicht function $f$ as $m\rightarrow\infty$, the convergence
of the Hayman-indexes $\alpha(f_{m})$ of $f_{m}\in S$ to the Hayman-index
$\alpha(f)$ of the limit-function $f\in S$ seems to be an essential
hypothesis in order to establish the simultaneous convergence of $|a_{n}(f_{m})|/n$
to $\alpha(f_{m})$ as $m,n\rightarrow\infty$. The first theorem
assumes that this minimal hypothesis holds.
\begin{thm}
Let $(f_{m})$, $f_{m}\in S$, $m\in\mathbb{N}$ be given by $f_{m}(z)=\sum_{n=1}^{\infty}a_{n}^{(m)}z^{n}$
and let $\epsilon>0$ be chosen arbitrarily. Furthermore suppose that\\
(i) $(f_{m})$ converges locally uniformly in $\mathbb{D}$ to a schlicht
function $f$ as $m\rightarrow\infty$ \\
(ii) $\alpha_{m}\rightarrow\alpha$ as $m\rightarrow\infty$ where
$\alpha_{m}$ and $\alpha$ denote the Hayman-indexes of $f_{m}$
and $f$ respectively\\
Then there exists a constant $N(\epsilon)$ only dependent on $\epsilon$
such that 
\[
\frac{\left|a_{n}^{(m)}\right|}{n}<\epsilon+\sqrt{\alpha_{m}}
\]
whenever $m,n>N(\epsilon)$.
\end{thm}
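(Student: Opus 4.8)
The plan is to realize Theorem~1 as an instance of Lemma~1, applied to a sequence of power series with real coefficients built out of $(f_m)$, with Lemma~2 supplying the uniform‑convergence hypothesis of Lemma~1. For each $m$ pass from $f_m$ to its majorant $f_m^{*}(z)=\sum_{k\ge1}|a_k^{(m)}|z^{k}$ (nonnegative real coefficients, $f_m^{*}(r)\ge M_\infty(r,f_m)$) and set
\[
G_m(t)=\frac{(1-t)^2}{t}\,f_m^{*}(t)=(1-t)^2\sum_{k\ge0}|a_{k+1}^{(m)}|\,t^{k}\qquad(a_0^{(m)}:=0).
\]
The Taylor coefficients of $G_m$ are the second differences of $(|a_{k+1}^{(m)}|)_{k\ge0}$, so their partial sums telescope to $s_n^{(m)}=|a_{n+1}^{(m)}|-|a_n^{(m)}|$, and hence $\tfrac{1}{n+1}\sum_{k=0}^{n}s_k^{(m)}=|a_{n+1}^{(m)}|/(n+1)$. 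Thus, once Lemma~1 is applied to $(G_m)$ — with the constant it calls $\alpha_m$ being the Hayman index of $f_m$ — its conclusion reads: for every $\epsilon>0$ there is $N(\epsilon)$ with $\bigl|\,|a_{n+1}^{(m)}|/(n+1)-\alpha_m\,\bigr|<\epsilon$ for $m,n>N(\epsilon)$. Since Hayman indices lie in $[0,1]$ one has $\alpha_m\le\sqrt{\alpha_m}$, so this is already stronger than the assertion of Theorem~1 and leaves room for any slack incurred along the way.

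The hypotheses of Lemma~1 for $(G_m)$ split into two easy ones and one hard one. Hypothesis (i): by Hayman's regularity theorem $|a_k^{(m)}|/k\to\alpha_m$ for each fixed $m$, whence an Abelian estimate gives $f_m^{*}(t)\sim\alpha_m\,t/(1-t)^2$ and therefore $G_m(t)\to\alpha_m$ as $t\to1-$, with $|\alpha_m|\le1$. Hypothesis (iii) asks that $s_n^{(m)}-\alpha_m$ be bounded above uniformly, i.e. that $|a_{n+1}^{(m)}|-|a_n^{(m)}|$ be bounded above; this is furnished by the classical bound $\bigl||a_{n+1}|-|a_n|\bigr|\le A$ on successive coefficients of univalent functions (an absolute constant), so (iii) holds with $L=A+1$.

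Hypothesis (ii) — uniform convergence of $(G_m)$ on the \emph{closed} interval $[0,1]$ — is the substance of the proof, and it is here that both hypotheses of Theorem~1 and Lemma~2 enter. On each $[0,1-\delta]$ uniform convergence is automatic: $|a_k^{(m)}|\le k$ makes $(f_m^{*})$ a normal family and $|a_k^{(m)}|\to|a_k|$ forces $f_m^{*}\to f^{*}$, hence $G_m\to G:=(1-t)^2f^{*}(t)/t$, locally uniformly on $[0,1)$. The real point is the collar $t\to1$: one must know $G_m(t)\to\alpha_m$ \emph{uniformly in} $m$, and I would obtain this from Lemma~2. The functions $G_m$ are bounded by $1$ (as $f_m^{*}(r)\le r/(1-r)^2$), converge pointwise on $[0,1)$ to $G$, and — this is exactly where $\alpha_m\to\alpha$ is used — their pointwise limit extends continuously to $[0,1]$ with value $\alpha$ at $1$; the remaining input Lemma~2 demands is that each $G_m$ be non‑increasing on $[0,1)$. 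That monotonicity is a Bazilevich‑type property of the coefficient‑majorant of a univalent function; it is modelled on the cleaner fact that $r\mapsto\frac{(1-r)^2}{r}M_\infty(r,f_m)$ is non‑increasing, which holds because, at a point where the maximum modulus is attained,
\[
\frac{d}{dr}\log M_\infty(r,f_m)=\frac1r\,\mathrm{Re}\,\frac{z f_m'(z)}{f_m(z)}\le\frac1r\cdot\frac{1+r}{1-r}=\frac{d}{dr}\log\frac{r}{(1-r)^2},
\]
by the univalence estimate $\mathrm{Re}\,\frac{zf'(z)}{f(z)}\le\frac{1+|z|}{1-|z|}$. Granting this, Lemma~2 gives the uniform convergence of $(G_m)$ on $[0,1]$ required by (ii), Lemma~1 applies, and unwinding the telescoping sum yields the $N(\epsilon)$ with $|a_n^{(m)}|/n<\alpha_m+\epsilon\le\sqrt{\alpha_m}+\epsilon$ for $m,n>N(\epsilon)$.

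The single genuine obstacle is precisely the collar behaviour inside hypothesis (ii): locally uniform convergence $f_m\to f$ is worthless there — the standard example $f_m(z)=r_m^{-1}k(r_m z)$ with $r_m\to1$, where $\alpha_m=0$ but $\alpha=1$, shows the collar can behave arbitrarily badly — and it is the monotonicity of the normalized majorant (a Bazilevich‑type input), fed into Lemma~2 and anchored by the hypothesis $\alpha_m\to\alpha$ that forces the limit to be continuous, which makes the argument close. Getting the correct monotone majorant with the right limit at $t=1$, rather than the $M_\infty$‑ or integral‑mean surrogates (which are monotone but tend to $0$ at $t=1$), is the delicate technical step.
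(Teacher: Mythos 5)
Your reduction to Lemma 1 via the coefficient majorant $f_m^{*}(z)=\sum_k|a_k^{(m)}|z^k$ and the telescoping identity $\frac{1}{n+1}\sum_{k=0}^{n}s_k^{(m)}=|a_{n+1}^{(m)}|/(n+1)$ is clean, and your verifications of hypotheses (i) (Hayman regularity plus an Abelian argument) and (iii) (the bound on $\bigl||a_{n+1}|-|a_n|\bigr|$) are fine. But the argument has a genuine gap exactly where you locate the ``delicate technical step'': you never prove that $G_m(t)=(1-t)^2t^{-1}f_m^{*}(t)$ is non-increasing on $[0,1]$, and without that Lemma 2 cannot be invoked, so hypothesis (ii) of Lemma 1 is unverified and the whole argument collapses. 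The justification you sketch — the differential inequality $\mathrm{Re}\,\{zf'(z)/f(z)\}\leq(1+r)/(1-r)$ — establishes monotonicity of $(1-r)^2r^{-1}|f(re^{i\theta_0})|$ along a fixed ray and of $(1-r)^2r^{-1}M_\infty(r,f)$ (this is the Lemma in \cite{1}, chapter 5.5 that the paper uses), but it says nothing about the coefficient majorant: $f^{*}$ is not univalent, is not the modulus of $f$ along any ray, and satisfies $f^{*}(r)\geq M_\infty(r,f)$ with no control in the other direction. Calling the needed statement ``a Bazilevich-type property'' does not make it one; it is not in the literature, and your proposal provides no route to it.

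There is also strong circumstantial evidence that this gap cannot be closed. If your argument worked, it would deliver the two-sided estimate $\bigl|\,|a_{n+1}^{(m)}|/(n+1)-\alpha_m\bigr|<\epsilon$ for \emph{every} sequence satisfying (i) and (ii) of Theorem 1 — that is, the conclusion of Theorem 2 with the full-mapping hypothesis deleted. The paper explicitly singles this out as an open question and conjectures it is false (because strict inequality in Bazilevich's theorem obstructs the argument). The $\sqrt{\alpha_m}$ in Theorem 1 is not slack to be absorbed: it is the precise artifact of the paper's actual route, which for $\alpha>0$ applies Lemma 1 not to a coefficient majorant but to $h_m(r)=\log\bigl((1-r)^2r^{-1}f_m(r)\bigr)=\sum_k2(\gamma_k^{(m)}-k^{-1})r^k$ — whose real part \emph{is} known to be non-increasing — and then converts Ces\`aro control of the logarithmic coefficients into $|a_{n+1}^{(m)}|<(n+1)e^{\epsilon}\sqrt{\alpha_m}$ via Milin's lemma, Bazilevich's theorem and the second Lebedev--Milin inequality (with a separate Prawitz/integral-means argument when $\alpha=0$). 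That your scheme, if valid, would strictly outperform this machinery is the clearest sign that the missing monotonicity is not merely unproved but is the entire difficulty.
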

\begin{proof}
Let $\epsilon>0$ be chosen arbitrarily and consider the case $\alpha>0$,
that is the limit function $f\in S$ is of maximal growth, first.
Then, without loss in generality, it may be supposed that for each
$m\in\mathbb{N}$ $f_{m}\in S$ has Hayman-index $\alpha_{m}>0$ and
radius of greatest growth in the direction of the positive real axis.
For $m\in\mathbb{N}$ and $0<r<1$ define $h_{m}$ by $h_{m}(r)=\mathit{log}((1-r)^{2}r^{-1}f_{m}(r))=\sum_{k=1}^{\infty}2(\gamma_{k}^{(m)}-k^{-1})r^{k}$,
where $\gamma_{k}^{(m)}$, $k\in\mathbb{N}$ denote the logarithmic
coefficients of $f_{m}\in S$. Then for each $m\in\mathbb{N}$ $\mathit{Re}\{h_{m}(r)\}=\mathit{log}((1-r)^{2}r^{-1}|f_{m}(r)|)$
is non-increasing in $[0,1]$(\cite{1},chapter 5.5), and by (ii)
the limit function defined by $\underset{m\rightarrow\infty}{\mathit{lim}}\mathit{Re}\{h_{m}(r)\}=\mathit{log}((1-r)^{2}r^{-1}|f(r)|)$
for $r\in[0,1]$ is continuous in $[0,1]$. Hence, by Lemma 2, the
sequence $(\mathit{Re}\,h_{m})$ converges uniformly in $[0,1]$ as
$m\rightarrow\infty$. Furthermore by Milin's lemma(\cite{1}, chapter
5.4) 
\[
\sum_{k=1}^{n}2\left(\mathit{Re}\gamma_{k}^{(m)}-\frac{1}{k}\right)=\sum_{k=1}^{n}\left(k\left|\gamma_{k}^{(m)}\right|^{2}-\frac{1}{k}\right)-\sum_{k=1}^{n}k\left|\gamma_{k}^{(m)}-\frac{1}{k}\right|^{2}\leq\delta<0.312
\]
where $\delta$ denotes Milin's constant. Therefore Lemma 1 can be
applied and asserts that there exists a $N_{1}(\epsilon)\in\mathbb{N}$
such that
\begin{equation}
\left|\frac{2}{n+1}\sum_{k=1}^{n}(n+1-k)\left(\mathit{Re}\gamma_{k}^{(m)}-\frac{1}{k}\right)-\mathit{log}\alpha_{m}\right|<\epsilon
\end{equation}
whenever $m,n>N_{1}(\epsilon)$. Since by Basilevich's theorem(\cite{1},Theorem
5.5) 
\begin{equation}
\frac{1}{n+1}\sum_{k=1}^{n}(n+1-k)k\left|\gamma_{k}^{(m)}-\frac{1}{k}\right|^{2}\leq\sum_{k=1}^{\infty}k\left|\gamma_{k}^{(m)}-\frac{1}{k}\right|^{2}\leq-\frac{1}{2}\mathit{log}\alpha_{m}
\end{equation}
for each $m\in\mathbb{N}$ (13) and (14) imply that 
\begin{equation}
\frac{1}{n+1}\sum_{k=1}^{n}(n+1-k)\left(k\left|\gamma_{k}^{(m)}\right|^{2}-\frac{1}{k}\right)<-\frac{1}{2}\mathit{log}\alpha_{m}+\mathit{log}\alpha_{m}+\epsilon
\end{equation}
if $m,n>N_{1}(\epsilon)$. But the second Lebedev-Milin inequality
applied to (15) yields 
\[
\left|a_{n+1}^{(m)}\right|\leq\sum_{k=0}^{n}\left|b_{k}^{(m)}\right|^{2}\leq(n+1)\mathit{exp}\left\{ \frac{1}{n+1}\sum_{k=1}^{n}(n+1-k)\left(k\left|\gamma_{k}^{(m)}\right|^{2}-\frac{1}{k}\right)\right\} <(n+1)e^{\epsilon}\sqrt{\alpha_{m}}
\]
whenever $m,n>N_{1}(\epsilon)$ where $b_{k}^{(m)}$, $m\in\mathbb{N}$,
$k\in\mathbb{N}_{0}$ denote the coefficients of the functions $\sqrt{r^{-1}f_{m}(r)}$,
$m\in\mathbb{N}$. This proves the theorem, if the limit function
$f\in S$ has Hayman-index $\alpha>0$. In order to complete the proof,
consider the second case, that is suppose that the limit-function
$f\in S$ has Hayman-index $\alpha=0$. For $m\in\mathbb{N}$ define
$g_{m}(r)=r^{-1}(1-r)^{2}M_{\infty}(r,f_{m})$ and $g(r)=r^{-1}(1-r)^{2}M_{\infty}(r,f)$.
Then each of the functions $g,g_{m}:[0,1]\rightarrow\mathbb{R}$,
$m\in\mathbb{N}$ is non-increasing and non-negative(\cite{1}, chapter
5.5 Lemma). Moreover they are continuous in $[0,1]$ since the functions
$M_{\infty}(.,f)$ and $M_{\infty}(.,f_{m})$, $m\in\mathbb{N}$ clearly
are continuous in $[0,1)$. They are also continuous at $r=1$ because
$g(r)\rightarrow\alpha$ as $r\rightarrow1-$ and $g_{m}(r)\rightarrow\alpha_{m}$
as $r\rightarrow1-$ for $m\in\mathbb{N}$. In order to show that
$g_{m}(r)\rightarrow g(r)$ as $m\rightarrow\infty$ if $r\in[0,1]$
observe that this holds for $r=1$ and $r=0$ because of hypothesis
(ii) and since for every $m\in\mathbb{N}$ $g_{m}(0)=g(0)=1$. So
let $r\in(0,1)$ and suppose on the contrary that $g_{m}(r)\nrightarrow g(r)$
as $m\rightarrow\infty$ for some $r\in(0,1)$. Then there exists
a subsequence $(g_{m(k)}(r))$ and a $\beta\in[0,1]$ such that
\begin{equation}
\underset{k\rightarrow\infty}{\mathit{lim}}g_{m(k)}(r)=\beta\neq g(r).
\end{equation}
However the subsequence can be chosen in such a way that $M_{\infty}(r,f_{m(k)})=|f_{m(k)}(z_{m(k)})|$
where $z_{m(k)}\rightarrow z$ as $k\rightarrow\infty$ for some $z$,
$z_{m(k)}$ with $|z|\leq r$, $|z_{m(k)}|\leq r$ , $k\in\mathbb{N}$,
and such that (16) holds. But, because of the uniform convergence
of the sequence $(f_{m})$ in $|z|\leq r$, there exists a $N_{1}(\epsilon)\in\mathbb{N}$
such that 
\begin{equation}
\left|\left|f_{m(k)}(z_{m(k)})\right|-\left|f(z)\right|\right|=\left|M_{\infty}(r,f_{m(k)})-\left|f(z)\right|\right|<\frac{\epsilon}{3}.
\end{equation}
if $k>N_{1}(\epsilon)$. Let $w$, $|w|\leq r$ be a point such that
$|f(w)|=M_{\infty}(r,f)$. Then, since $(f_{m})$ converges uniformly
in $|z|\leq r$ as $m\rightarrow\infty$, there exists a $N_{2}(\epsilon)\in\mathbb{N}$
such that 
\begin{equation}
\left|\left|f_{m(k)}(w)\right|-M_{\infty}(r,f)\right|<\frac{\epsilon}{3}
\end{equation}
if $k>N_{2}(\epsilon)$. Therefore (17), (18) and the fact, that $f$
and $f_{m(k)}$, $k\in\mathbb{N}$ take on their maxima if $|z|\leq r$
at $z=w$ and $z=z_{m(k)}$, $k\in\mathbb{N}$ respectively, yield
\begin{equation}
0\leq M_{\infty}(r,f_{m(k)})-\left|f_{m(k)}(w)\right|\leq M_{\infty}(r,f_{m(k)})-\left|f_{m(k)}(w)\right|+M_{\infty}(r,f)-\left|f(z)\right|<\frac{2\epsilon}{3}
\end{equation}
whenever $k>\mathit{max}\{N_{1}(\epsilon),N_{2}(\epsilon)\}$. Now
(18) and (19 ) imply that 
\[
\left|M_{\infty}(r,f_{m(k)})-M_{\infty}(r,f)\right|\leq\left|M_{\infty}(r,f_{m(k)})-\left|f_{m(k)}(w)\right|\right|+\left|\left|f_{m(k)}(w)\right|-M_{\infty}(r,f)\right|<\epsilon
\]
if $k>\mathit{max}\{N_{1}(\epsilon),N_{2}(\epsilon)\}$ which proves
that $g_{m(k)}(r)\rightarrow g(r)$ as $k\rightarrow\infty$, a contradiction
to (16). So $g_{m}(r)\rightarrow g(r)$ as $m\rightarrow\infty$ for
$r\in[0,1]$ and an appeal to Lemma 2 shows that $g_{m}\rightarrow g$
uniformly on $[0,1]$ as $m\rightarrow\infty$. But by Prawitz' theorem(\cite{1},Theorem
2.22) 
\[
\frac{d}{dr}\left\{ \frac{1}{2\pi}\int_{0}^{2\pi}\left|f_{m}(re^{it})\right|dt\right\} \leq\frac{1}{r}M_{\infty}(r,f_{m})=g_{m}(r)\frac{1}{(1-r)^{2}}
\]
if $m\in\mathbb{N}$. Integrating the last inequality from $r_{0}$
to $r$ where $0<r_{0}<r<1$ yields
\begin{eqnarray*}
\frac{1}{2\pi}\int_{0}^{2\pi}\left|f_{m}(re^{it})\right|dt-\frac{1}{2\pi}\int_{0}^{2\pi}\left|f_{m}(r_{0}e^{it})\right|dt & \leq & g_{m}(r_{0})\int_{r_{0}}^{r}(1-t)^{-2}dt\\
 & = & g_{m}(r_{0})\left(\frac{1}{1-r}-\frac{1}{1-r_{0}}\right).
\end{eqnarray*}
It follows that 
\begin{equation}
(1-r)\frac{1}{2\pi}\int_{0}^{2\pi}\left|f_{m}(re^{it})\right|dt\leq(1-r)\frac{1}{2\pi}\int_{0}^{2\pi}\left|f_{m}(r_{0}e^{it})\right|dt+g_{m}(r_{0})
\end{equation}
for $m\in\mathbb{N}$. Because $g_{m}\rightarrow g$ uniformly on
$[0,1]$ as $m\rightarrow\infty$, because each $g_{m}$ is continuous
on $[0,1]$ and because $g(1)=0$ there exists a $N_{3}(\epsilon)\in\mathbb{N}$
such that
\begin{equation}
0<g_{m}(r)<\epsilon
\end{equation}
whenever $m>N_{3}(\epsilon)$ and $1-N_{3}(\epsilon)^{-1}\leq r<1$.
Therefore, if $r_{0}$ in (20) is chosen as $r_{0}=1-N_{3}(\epsilon)^{-1}$
then, since $f_{m}\rightarrow f$ locally uniformly in $\mathbb{D}$
as $m\rightarrow\infty$, there exists a $N_{4}(\epsilon)\in\mathbb{N}$,
$N_{4}(\epsilon)>N_{3}(\epsilon)$ such that for $t\in[0,2\pi]$ $|f_{m}(r_{0}\mathit{exp}(it))-f(r_{0}\mathit{exp}(it))|<\epsilon$
if $m>N_{4}(\epsilon)$. Then there exists also a $N_{5}(\epsilon)\in\mathbb{N}$,
$N_{5}(\epsilon)>N_{4}(\epsilon)$ such that
\begin{eqnarray*}
(1-r)\frac{1}{2\pi}\int_{0}^{2\pi}\left|f_{m}\left(r_{0}e^{it}\right)\right|dt & < & (1-r)\left\{ \epsilon+\frac{1}{2\pi}\int_{0}^{2\pi}\left|f\left(r_{0}e^{it}\right)\right|dt\right\} \\
 & < & \epsilon
\end{eqnarray*}
whenever $1-N_{5}(\epsilon)^{-1}\leq r<1$ and $m>N_{5}(\epsilon)$.
In order to complete the proof of Theorem 1 let $r=r_{n}=1-n^{-1}$
and observe that by (20), (21) and the last inequality the Cauchy
inequality for the coefficients yields
\begin{eqnarray*}
\left|\frac{a_{n}^{(m)}}{n}\right|=\left|a_{n}^{(m)}\right|(1-r_{n}) & \leq & r_{n}^{-n}(1-r_{n})\frac{1}{2\pi}\int_{0}^{2\pi}\left|f_{m}(r_{n}e^{it})\right|dt
\end{eqnarray*}
\begin{eqnarray*}
 & \leq & r_{n}^{-n}\left((1-r_{n})\frac{1}{2\pi}\int_{0}^{2\pi}\left|f_{m}(r_{0}e^{it})\right|dt+g_{m}(r_{0})\right)<4(\epsilon+\epsilon)
\end{eqnarray*}

if $m,n>\mathit{max}\{N_{1}(\epsilon),N_{2}(\epsilon),N_{5}(\epsilon)\}$.
This completes the proof of Theorem 1 if $\alpha=0$.
\end{proof}
There are some interesting applications of Theorem 1 to asymptotic
extremal problems concerning the class of schlicht functions.
\begin{example}
(Asymptotic Bieberbach Conjecture) If for each $n\in\mathbb{N}$ $f_{n}$
is assumed to be a schlicht function that maximizes the modulus of
the $n$-th coefficient then Theorem 1 reveals that in order to prove
the asymptotic Bieberbach conjecture(now superseded by de Branges
theorem) it only has to be shown that for any subsequence $(f_{n(k)})$
of $(f_{n})$ that converges locally uniformly in $\mathbb{D}$ to
some schlicht function $f$ the Hayman-indexes $\alpha(f_{n(k)})$
of $f_{n(k)}\in S$ converge to the Hayman-index $\alpha(f)$ of $f\in S$
as $k\rightarrow\infty$(for more details about the asymptotic Bieberbach
conjecture see \cite{1}, chapter 2.12).
\end{example}
Here is another example how Theorem 1 can be applied to asymptotic
extremal problems for schlicht functions.
\begin{example}
(Asymptotic Zalcman Conjecture) Define functionals $F_{n}$, $n\in\mathbb{N}$
for $f\in S$ by $F_{n}(f)=|a_{n}(f)^{2}-a_{2n-1}(f)|$. Then Zalcman's
conjecture asserts that $F_{n}(f)\leq(n-1)^{2}$ for $f\in S$. Suppose
that $f_{n}$, $n\in\mathbb{N}$ maximizes $F_{n}$ within the class
of schlicht functions and that there were a subsequence $(f_{n(k)})$
that converges to a schlicht function $f$ of slow growth. Then the
sequence $(f_{n(k)})$ satisifies hypothesis (ii) of Theorem 1(by
Lemma 2) and consequently $(n(k)-1)^{-2}F_{n(k)}(f_{n(k)})\rightarrow0$
as $k\rightarrow\infty$ by Theorem 1, a contradiction since the Koebe-function
$k$ satisfies $F_{n}(k)=(n-1)^{2}>0$ if $n\geq2$. This argument
shows that the functionals $(F_{n})$(and certain other sequences
of coefficient functionals), or rather their extremal functions, have
no accumulation points of slow growth(with respect to the topology
of uniform convergence on compact subsets of $\mathbb{D}$). 
\end{example}
The next lemma will be used in the proof of Theorem 2, however it
is of interest in itself, since it provides an extension of an earlier
result of Bazilevich(\cite{5}) concerning the case of equality in
his theorem.
\begin{lem}
Let $f\in S$ have Hayman-index $\alpha>0$ and suppose that $g:\Delta\rightarrow\mathbb{C}$
defined by $g(z)=f(z^{-1})^{-1}$ is a full mapping, that is $g\in\widetilde{\Sigma}$.
Furthermore let $\gamma_{n}$, $n\in\mathbb{N}$ denote the logarithmic
coefficients of $f\in S$ and let $\mathit{exp\,}(i\theta)$, $\theta\in[0,2\pi)$
denote the direction of greatest growth of $f\in S$. Then 
\[
\sum_{n=1}^{\infty}n\left|\gamma_{n}-\frac{1}{n}\mathit{exp\,}(-in\theta)\right|^{2}=-\frac{1}{2}\mathit{log}\alpha
\]
\end{lem}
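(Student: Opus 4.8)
The statement is the reverse of Bazilevich's inequality (\cite{1}, Theorem 5.5) under the full-mapping hypothesis, so the whole work lies in showing that, for $g\in\widetilde{\Sigma}$, the slack in that inequality vanishes. I would begin by normalising: replacing $f(z)$ by $e^{-i\theta}f(e^{i\theta}z)$ rotates $g$ (hence preserves membership in $\widetilde{\Sigma}$), leaves $\alpha$ unchanged, and replaces $\gamma_n$ by $e^{in\theta}\gamma_n$, so that $|\gamma_n-n^{-1}e^{-in\theta}|$ and the whole sum are unaltered; thus one may assume the direction of greatest growth is the positive real axis, i.e. $(1-r)^2f(r)\to\alpha$ and $(1-r)\,rf'(r)/f(r)\to2$ as $r\to1-$ (the latter being part of the regularity theory of \cite{1}, Chapter 5). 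It is also worth recording that $g=f(\cdot^{-1})^{-1}\in\widetilde{\Sigma}$ is equivalent to $\mathbb{C}\setminus f(\mathbb{D})$ having planar measure zero, since $\zeta\mapsto\zeta^{-1}$ carries $\widehat{\mathbb{C}}\setminus f(\mathbb{D})$ onto $\mathbb{C}\setminus g(\Delta)$ and preserves null sets (cf. \cite{1}, Chapter 2); so the hypothesis really says $f$ omits only a null set.

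Next I would use the Koebe function as the comparison object. Put $\kappa(w)=(w-1)^2/w=k(w^{-1})^{-1}\in\widetilde{\Sigma}$; its Grunsky coefficients are $c_{mn}(\kappa)=m^{-1}\delta_{mn}$, and $g(w)/\kappa(w)=\exp\bigl(-2\sum_{n\ge1}(\gamma_n-n^{-1})w^{-n}\bigr)$, so the logarithmic coefficients of $g/\kappa$ are exactly the $2(\gamma_n-n^{-1})$. Since $\kappa$ has a zero at $w=1$ and $g(w)=\kappa(w)\,[f(w^{-1})/k(w^{-1})]^{-1}$ with $f(r)/k(r)\to\alpha\neq0$, one gets $g(1/r)=f(r)^{-1}\to0$; letting $\zeta\to1$ in $\log\frac{g(w)-g(\zeta)}{w-\zeta}=-\sum_{m,n\ge1}c_{mn}(g)w^{-m}\zeta^{-n}$ therefore yields $\sum_{n\ge1}c_{mn}(g)=2\gamma_m-m^{-1}$, and likewise $\sum_{n\ge1}c_{mn}(\kappa)=m^{-1}$. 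For $g\in\widetilde{\Sigma}$ the Grunsky inequalities hold with equality (\cite{1}, Chapter 2), so feeding $\lambda_n=t^n$, $0<t<1$, into the Grunsky identity for $g$ and for $\kappa$, subtracting, and using $|A|^2-|B|^2=|A-B|^2+2\operatorname{Re}((A-B)\bar B)$ together with $\sum_n c_{mn}(\kappa)t^n=t^m/m$, one arrives after evaluating the resulting power series at $w=t^{-1}$ (where $\frac{g(w)-g(t^{-1})}{\kappa(w)-\kappa(t^{-1})}\to g'(t^{-1})/\kappa'(t^{-1})$) at the identity
\[
\sum_{m\ge1}m\,\Bigl|\sum_{n\ge1}\bigl(c_{mn}(g)-c_{mn}(\kappa)\bigr)t^n\Bigr|^2
\;=\;2\log\Bigl|\frac{g'(t^{-1})}{\kappa'(t^{-1})}\Bigr|
\;=\;2\log\Bigl|\frac{t^2f'(t)}{(1-t^2)\,f(t)^2}\Bigr|,\qquad 0<t<1 .
\]

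Finally I would let $t\to1-$. On the right, the normalisation gives $\dfrac{t^2f'(t)}{(1-t^2)f(t)^2}\to\alpha^{-1}$, so the right-hand side tends to $-2\log\alpha$. On the left, the $m$-th inner sum tends to $2(\gamma_m-m^{-1})$, and if the interchange of limit and summation is legitimate the left-hand side tends to $4\sum_{m\ge1}m|\gamma_m-m^{-1}|^2$; equating and un-rotating gives $\sum_m m|\gamma_m-n^{-1}e^{-in\theta}|^2=-\tfrac12\log\alpha$. The delicate step — and the one I expect to be the main obstacle — is precisely this passage to the limit on the left: Bazilevich's inequality already supplies $\sum_m m|\gamma_m-m^{-1}|^2<\infty$ and Fatou's lemma gives the "$\ge$" half, but the reverse (no escape of mass to high frequencies, i.e. no "defect" concentrated near $|w|=1$) is exactly where the full-mapping hypothesis must be used beyond the mere Grunsky equality; equivalently, it is the assertion that the slack in Bazilevich's theorem equals $\pi^{-1}$ times the area omitted by the relevant $\Sigma$-function, which is the equality analysis of \cite{5} that the lemma extends. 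If the Grunsky route proves awkward at this point, the safer alternative is to rerun Bazilevich's own area-theoretic proof of Theorem 5.5 — built, via the square-root transform, from the function $g$ — and to observe directly that for $g\in\widetilde{\Sigma}$ the omitted area responsible for the slack is zero.
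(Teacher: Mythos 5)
Your route is essentially the paper's. The identity you aim for,
\[
\sum_{m\geq1}m\left|\sum_{n\geq1}\bigl(c_{mn}(g)-c_{mn}(\kappa)\bigr)t^{n}\right|^{2}=2\,\mathit{log}\left|\frac{t^{2}f'(t)}{(1-t^{2})f(t)^{2}}\right|,
\]
is exactly the paper's equation (26) with $\theta=0$, since $\sum_{n}c_{mn}(g)t^{n}$ is the paper's $A_{m}(t)$ and $\sum_{n}c_{mn}(\kappa)t^{n}=t^{m}/m$; you reach it by subtracting the two Grunsky equalities and polarizing, the paper by expanding $|A_{m}(t)-t^{m}/m|^{2}$ directly using $\sum_{m}m|A_{m}(z)|^{2}=-\mathit{log}(1-|z|^{2})$ and $\sum_{m}mA_{m}(z)z^{m}=2\mathit{log}(z^{-1}f(z))-\mathit{log}f'(z)$. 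Two smaller remarks. First, extending the Grunsky equality from finitely supported test vectors to $\lambda_{n}=t^{n}$ is not free; the paper spends its steps (22)--(24) (Banach--Steinhaus plus Parseval) on exactly this, so your appeal to ``the Grunsky identity for $g$'' deserves at least that much. Second, the limit $(1-t)^{3}|f'(t)|/(1+t)\rightarrow\alpha$, which you import from the regularity theory of Chapter 5, is proved from scratch in the paper (its (27)--(30)): the quantity is shown to be decreasing and bounded above by $(1-t)^{2}|f(t)|/t$, and squeezed from below via $|f(t)|\leq\int_{0}^{t}|f'|$ and de L'Hospital. Neither point is a fatal objection.

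The genuine gap is the one you name yourself: the passage $t\rightarrow1-$ inside the sum on the left. With $u_{m}(t)=m|\sum_{n}(c_{mn}(g)-c_{mn}(\kappa))t^{n}|^{2}\geq0$ and $u_{m}(t)\rightarrow4m|\gamma_{m}-m^{-1}|^{2}$ termwise, Fatou gives only $-2\mathit{log}\,\alpha=\underset{t\rightarrow1-}{\mathit{lim}}\sum_{m}u_{m}(t)\geq4\sum_{m}m|\gamma_{m}-m^{-1}|^{2}$, which is Bazilevich's inequality and not the new content of the lemma; what is needed is the reverse bound, i.e.\ no escape of mass to high $m$ as $t\rightarrow1-$. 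This cannot follow softly from the isometry property, because the test vectors $(t^{n})_{n}$ have norms $\sum_{n}t^{2n}/n=-\mathit{log}(1-t^{2})\rightarrow\infty$, so one is comparing two quantities that individually diverge. Your fallback (``rerun Bazilevich's area-theoretic proof'') is a plan, not an argument, so the proposal as it stands proves only the inequality that was already known. You should be aware, however, that the paper's own proof does not supply more at this point either: it lets $r\rightarrow1-$ in (26) term by term, citing only the pointwise limits $A_{m}(re^{i\theta})\rightarrow2\gamma_{m}-m^{-1}e^{-im\theta}$, without justifying the interchange of limit and sum. So you have correctly located the crux of the lemma; you have not closed it, and any complete write-up must add an argument precisely there.
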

\begin{proof}
In order to prove the lemma it obviously suffices to consider the
case that $f\in S$ is not a rotation of the Koebe-function. Let $A_{n}(z)=\sum_{k=1}^{\infty}\gamma_{nk}z^{k}$
, $z\in\mathbb{D}$ where $\gamma_{nk}$, $k,n\in\mathbb{N}$ denote
the Grunsky coefficients of $g\in\widetilde{\Sigma}$. For $n\in\mathbb{N}$
and an arbitrarily chosen $\mathbf{u}=(u_{1},u_{2},....)\in\ell^{2}$
define a sequence of continuous linear mappings $T_{n}:\ell^{2}\rightarrow\ell^{2}$
$T_{n}\mathbf{u}=\mathbf{v}=(v_{1},v_{2},.....)$ by $v_{k}=\sqrt{k}\sum_{j=1}^{n}\sqrt{j}\gamma_{kj}u_{j}$,
$k\in\mathbb{N}$. Further, let $\left\Vert .\right\Vert $ and $(.,.)$
denote the norm and inner product of the Hilbert space $\ell^{2}$.
The mappings $T_{n}$, $n\in\mathbb{N}$ are well defined because
the strong Grunsky inequality(\cite{1}, chapter 4.3 formula (10))
yields 
\begin{equation}
\left\Vert T_{n}\mathbf{u}\right\Vert ^{2}=(T_{n}\mathbf{u},T_{n}\mathbf{u})=\sum_{k=1}^{\infty}\left|v_{k}\right|^{2}=\sum_{k=1}^{\infty}k\left|\sum_{j=1}^{n}\gamma_{kj}\sqrt{j}u_{j}\right|^{2}\leq\sum_{j=1}^{n}\left|u_{j}\right|^{2}\leq\left\Vert \mathbf{u}\right\Vert ^{2}
\end{equation}
that is $T_{n}\mathbf{u}\in\ell^{2}$ and $\left\Vert T_{n}\right\Vert \leq1$
for $n\in\mathbb{N}$. Now define a dense subset $U\subseteq\ell^{2}$
by $U=\bigcup_{k\in\mathbb{N}}U_{k}$ where $U_{k}=\{\mathbf{u}=(u_{1},u_{2},....)\in\ell^{2}:u_{j}=0\;\mathit{for}\;j>k\}$,
$k\in\mathbb{N}$ and observe that the sequences $(T_{n}\mathbf{u})$
converge pointwise for each $\mathbf{u}\in U$ as $n\rightarrow\infty$.
Consequently the Banach-Steinhaus theorem(\cite{6}, Theorem 2.7)
implies that the mapping $T:\ell^{2}\rightarrow\ell^{2}$ $T\mathbf{u}=\underset{n\rightarrow\infty}{\mathit{lim}}T_{n}\mathbf{u}$
is well-defined for each $\mathbf{u}\in\ell^{2}$ and is linear and
continuous. Since the vectors $\mathbf{e_{k}}=(\delta_{1,k},\delta_{2,k},...)\in\ell^{2}$,
$k\in\mathbb{N}$, where $\delta_{i,k}=1$ if $i=k$ and $\delta_{i,k}=0$
if $i\neq k$ define an orthonormal basis of $\ell^{2}$ and since
pointwise convergence implies weak convergence Parseval's theorem
yields 
\begin{eqnarray}
\infty>\left\Vert T\mathbf{u}\right\Vert ^{2} & = & \sum_{k=1}^{\infty}\left|(T\mathbf{u},\mathbf{e_{k}})\right|^{2}\nonumber \\
 & = & \sum_{k=1}^{\infty}\left|\underset{n\rightarrow\infty}{\mathit{lim}}(T_{n}\mathbf{u},\mathbf{e_{k}})\right|^{2}\nonumber \\
 & = & \sum_{k=1}^{\infty}\left|\underset{n\rightarrow\infty}{\mathit{lim}}\sqrt{k}\sum_{j=1}^{n}\gamma_{kj}\sqrt{j}u_{j}\right|^{2}\nonumber \\
 & = & \sum_{k=1}^{\infty}k\left|\sum_{j=1}^{\infty}\gamma_{kj}\sqrt{j}u_{j}\right|^{2}
\end{eqnarray}
for each $\mathbf{u}\in\ell^{2}$. On the other hand, since $g\in\widetilde{\Sigma}$,
equality holds in the strong Grunsky inequalities and hence equality
holds in (22), which means that 
\begin{equation}
\infty>\left\Vert T\mathbf{u}\right\Vert ^{2}=\underset{n\rightarrow\infty}{\mathit{lim}}\left\Vert T_{n}\mathbf{u}\right\Vert ^{2}=\underset{n\rightarrow\infty}{\mathit{lim}}\sum_{j=1}^{n}\left|u_{j}\right|^{2}=\left\Vert \mathbf{u}\right\Vert ^{2}.
\end{equation}
Therefore, if $u_{j}=z(\sqrt{j})^{-1}$, $j\in\mathbb{N}$, by the
definition of $A_{n}(z)$ and by (23) and (24)
\begin{equation}
\sum_{n=1}^{\infty}n\left|A_{n}(z)\right|^{2}=\sum_{k=1}^{\infty}k\left|\sum_{j=1}^{\infty}\gamma_{kj}z^{j}\right|^{2}=\sum_{j=1}^{\infty}\frac{1}{j}\left|z_{j}\right|^{2}=-\mathit{log}(1-\left|z\right|^{2})
\end{equation}
if $g\in\widetilde{\Sigma}$. Now let $w(r)=r\mathit{exp\,}\mathit{(i\theta)}$,
$0<r<1$, then $|w(r)|=r$. By (25) and since $\sum_{n=1}^{\infty}nA_{n}(z)z^{n}=\mathit{2log(z^{-1}}f(z))-\mathit{log}f'(z)$
for $z\in\mathbb{D}$(see \cite{1}, proof of Theorem 5.5) it follows
that 
\begin{eqnarray}
\sum_{n=1}^{\infty}n\left|A_{n}(w(r))-\frac{1}{n}\overline{w(r})^{n}\right|^{2} & = & \sum_{n=1}^{\infty}n\left|A_{n}(w(r))\right|^{2}-2\mathit{Re}\left\{ \sum_{n=1}^{\infty}A_{n}(w(r))w(r)^{n}\right\} +\nonumber \\
 &  & \sum_{n=1}^{\infty}\frac{1}{n}\left|w(r)\right|^{2n}\nonumber \\
 & = & -2\mathit{log}\left(1-\left|w(r)\right|^{2}\right)+2\mathit{log}\frac{\left|w(r)^{2}f'(w(r))\right|}{\left|f(w(r))^{2}\right|}\nonumber \\
 & = & \mathit{2log}\left(\frac{\left|f'(w(r))\right|}{1+r}(1-r)^{3}\right)-4\mathit{log}\left(\frac{\left|f(w(r))\right|}{r}(1-r)^{2}\right).
\end{eqnarray}
In order to complete the proof it has to be shown that 
\begin{equation}
\underset{r\rightarrow1-}{\mathit{lim}}\frac{\left|f'(w(r))\right|}{1+r}(1-r)^{3}=\alpha
\end{equation}
if $\alpha>0$. On the one hand, if $0<r<1$, by \cite{1}, Theorem
2.7 
\begin{equation}
\frac{\left|f'(w(r))\right|(1-r)^{3}}{1+r}\leq\frac{\left|f(w(r))\right|(1-r)^{2}}{r}\leq1,
\end{equation}
and since $\alpha<1$ by \cite{1}, chapter 2.3, p. 33 
\begin{equation}
\frac{\partial}{\partial r}\mathit{log}\left(\frac{(1-r)^{3}}{1+r}\left|f'(w(r))\right|\right)=\frac{\partial}{\partial r}\mathit{log}\left|f'(w(r))\right|-\frac{4+2r}{1-r^{2}}<0.
\end{equation}
Hence by (29) $|f'(w(r))|(1-r)^{3}(1+r)^{-1}$ is strictly decreasing,
by (28) it is bounded and consequently $|f'(w(r))|(1-r)^{3}(1+r)^{-1}\rightarrow\vartheta$
as $r\rightarrow1-$ for some $0\leq\vartheta\leq1$. But since $\left|f(w(r))\right|r^{-1}(1-r)^{2}\rightarrow\alpha$
as $r\rightarrow1-$ (28) implies that
\begin{equation}
0\leq\vartheta\leq\alpha.
\end{equation}
 On the other hand by the fundamental theorem of calculus and by de
L'Hospital's rule(\cite{4}, Theorem 5.13)
\begin{eqnarray*}
\alpha=\underset{r\rightarrow1-}{\mathit{lim}}(1-r)^{2}\left|f(w(r))\right| & \leq & \underset{r\rightarrow1-}{\mathit{lim}}(1-r)^{2}\int_{0}^{r}\left|f'\left(t\mathit{exp\,}(i\theta)\right)\right|dt\\
 & = & \underset{r\rightarrow1-}{\mathit{lim}}\frac{\frac{d}{dr}\int_{0}^{r}\left|f'\left(t\mathit{exp\,}(i\theta)\right)\right|dt}{\frac{d}{dr}\{(1-r)^{-2}\}}\\
 & = & \underset{r\rightarrow1-}{\mathit{lim}}\frac{1}{2}(1-r)^{3}\left|f'(w(r))\right|=\vartheta.
\end{eqnarray*}
However then, by (30), $\alpha=\vartheta$, which proves (27). To
complete the proof of the lemma observe that
\begin{equation}
\underset{r\rightarrow1-}{\mathit{lim}}A_{n}\left(r\mathit{exp\,}(i\theta)\right)=2\gamma_{n}-\frac{1}{n}\mathit{exp\,}(-in\theta)
\end{equation}
(see \cite{1}, proof of Theorem 5.5). Now let $r\rightarrow1-$ on
both sides of equation (26) then by (27), (31) and by definition of
the Hayman-index
\begin{eqnarray*}
4\sum_{n=1}^{\infty}n\left|\gamma_{n}-\mathit{exp\,}(-in\theta)\frac{1}{n}\right|^{2} & = & \underset{r\rightarrow1-}{\mathit{lim}}\left\{ 2\mathit{log}\left(\frac{\left|f'(w(r))\right|}{1+r}(1-r)^{3}\right)-4\mathit{log}\left(\frac{\left|f'(w(r))\right|}{r}(1-r)^{2}\right)\right\} \\
 & = & \mathit{2log}\alpha-4\mathit{log}\alpha.
\end{eqnarray*}
This proves the lemma.
\end{proof}
Theorem 1 already provides a uniform convergence result for the coefficients
if the limit function is of slow growth. With the extended version
of Bazilevich's theorem at hand this result can be extended to the
case that the limit function is of maximal growth. 
\begin{thm}
Let $(f_{m})$, $f_{m}\in S$ be given by $f_{m}(z)=\sum_{n=1}^{\infty}a_{n}^{(m)}z^{n}$
and suppose that\\
(i) $(f_{m})$ converges locally uniformly in $\mathbb{D}$ to a schlicht
function $f$ of maximal growth as $m\rightarrow\infty$ such that
the function $z\rightarrow f(z^{-1})^{-1}$, $z\in\Delta$ is a full
mapping \\
(ii) $\alpha_{m}\rightarrow\alpha$ as $m\rightarrow\infty$ where
$\alpha_{m}$ and $\alpha$ denote the Hayman-indexes of $f_{m}$
and $f$ respectively\\
Then for every $\epsilon>0$ there exists a constant $N(\epsilon)$
only dependent on $\epsilon$ such that 
\[
\left|\frac{\left|a_{n}^{(m)}\right|}{n}-\alpha_{m}\right|<\epsilon
\]
whenever $m,n>N(\epsilon)$.
\end{thm}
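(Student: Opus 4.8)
The plan is to deduce the theorem from a single application of Lemma 1. First I would discard finitely many indices, so that every $\alpha_{m}>0$; each $f_{m}$ then has a unique direction of greatest growth, and --- arguing as in the proof of Theorem 1, using uniqueness of the Hayman direction when the index is positive together with the convergence $M_{\infty}(\cdot,f_{m})\to M_{\infty}(\cdot,f)$ established there --- these directions converge to that of $f$. Since the moduli $|a_{n}^{(m)}|$ are unchanged by the rotations $f_{m}(z)\mapsto e^{-i\vartheta}f_{m}(e^{i\vartheta}z)$, I may assume the direction of greatest growth of every $f_{m}$ and of $f$ is the positive real axis, so that $M_{\infty}(r,f_{m})=|f_{m}(r)|$ and $(1-r)^{2}r^{-1}|f_{m}(r)|\downarrow\alpha_{m}$. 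Write $\Phi_{m}(z)=\frac{1}{2}\log\bigl((1-z)^{2}z^{-1}f_{m}(z)\bigr)=\sum_{k\ge1}(\gamma_{k}^{(m)}-k^{-1})z^{k}$ and $\Phi(z)=\frac{1}{2}\log\bigl((1-z)^{2}z^{-1}f(z)\bigr)$, so that $f_{m}=k\cdot e^{2\Phi_{m}}$ (with $k$ the Koebe function), $\Phi_{m}(0)=0$, $\mathrm{Re}\,\Phi_{m}(r)\downarrow\frac{1}{2}\log\alpha_{m}$ on $[0,1)$, and $\Phi_{m}\to\Phi$ locally uniformly on $\mathbb{D}$.

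I would apply Lemma 1 to the functions $\Psi_{m}(r):=(1-r)^{2}\sum_{k\ge1}|a_{k}^{(m)}|r^{k}$ (set $|a_{j}^{(m)}|:=0$ for $j\le0$). Expanding $(1-r)^{2}$, its Taylor coefficients are the second differences of $(|a_{k}^{(m)}|)_{k}$, hence real; their partial sums telescope to $|a_{n}^{(m)}|-|a_{n-1}^{(m)}|$; and since $\Psi_{m}(r)/(1-r)^{2}=\sum_{k}|a_{k}^{(m)}|r^{k}$, the averaged sum in the conclusion of Lemma 1 equals $[r^{n}]\bigl(\Psi_{m}(r)/(1-r)^{2}\bigr)/(n+1)=|a_{n}^{(m)}|/(n+1)$. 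Thus Lemma 1 applied to $(\Psi_{m})$ yields exactly $\bigl|\alpha_{m}-|a_{n}^{(m)}|/(n+1)\bigr|<\epsilon$ for $m,n$ large, which (using $|a_{n}^{(m)}|\le e\,n$) is the assertion of the theorem. To run Lemma 1, hypothesis (iii) holds because $\bigl||a_{n}^{(m)}|-|a_{n-1}^{(m)}|\bigr|\le A$ for an absolute constant $A$ (Hayman's theorem on successive coefficients of univalent functions) and $\alpha_{m}\ge0$, so the partial sums stay below $A+1=:L$; and hypothesis (i) holds because Hayman's regularity theorem applied to each single $f_{m}$ gives $|a_{k}^{(m)}|/k\to\alpha_{m}$, whence a routine Abelian estimate gives $\Psi_{m}(t)\to\alpha_{m}$ as $t\to1-$, with $0\le\alpha_{m}\le1<2=:K$.

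What remains is hypothesis (ii) of Lemma 1 --- uniform convergence of $(\Psi_{m})$ on $[0,1]$ --- and this is where the full-mapping assumption and Lemma 3 enter, and where I expect the real difficulty to lie. On compact subintervals of $[0,1)$ one has $\Psi_{m}\to\Psi:=(1-r)^{2}\sum_{k}|a_{k}(f)|r^{k}$ uniformly (from $|a_{k}^{(m)}|\le e\,k$ and $a_{k}^{(m)}\to a_{k}(f)$ termwise), and $\Psi_{m}(1)=\alpha_{m}\to\alpha=\Psi(1)$; so (ii) is equivalent to the uniform-in-$m$ estimate $|a_{k}^{(m)}|/k\le\alpha_{m}+\epsilon$ for $k,m$ large, i.e.\ to sharpening the bound $\sqrt{\alpha_{m}}$ of Theorem 1 to the optimal $\alpha_{m}$. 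To obtain it I would first observe that Bazilevich's theorem gives $\sum_{k\ge1}k|\gamma_{k}^{(m)}-k^{-1}|^{2}\le-\frac{1}{2}\log\alpha_{m}$ for every $m$, that Lemma 3 (the equality case, applicable because $f$ is a full mapping) gives $\sum_{k\ge1}k|\gamma_{k}-k^{-1}|^{2}=-\frac{1}{2}\log\alpha$, and that, since $\gamma_{k}^{(m)}\to\gamma_{k}$ and $\alpha_{m}\to\alpha$, Fatou's lemma squeezes $\sum_{k\ge1}k|\gamma_{k}^{(m)}-k^{-1}|^{2}\to-\frac{1}{2}\log\alpha$; hence $\Phi_{m}\to\Phi$ strongly in the Dirichlet space, and the tails $\sum_{k>n}k|\gamma_{k}^{(m)}-k^{-1}|^{2}$ are uniformly small for $m,n\ge N(\epsilon)$. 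Using $f_{m}=k\cdot e^{2\Phi_{m}}$, whose only boundary singularity sits at $z=1$ where $(1-z)^{2}z^{-1}f_{m}(z)=e^{2\Phi_{m}(z)}$ has a radial limit of modulus $\alpha_{m}$, one expects the principal part to contribute $\sim\alpha_{m}n$ to $a_{n}^{(m)}$ and the remainder --- carrying uniformly small Dirichlet energy near $z=1$ --- to contribute $o(n)$ uniformly in $m$. The main obstacle is precisely this last estimate: equivalently, writing $\sqrt{z^{-1}f_{m}(z)}=e^{\Phi_{m}(z)}/(1-z)=\sum_{k}b_{k}^{(m)}z^{k}$, one must show that the cancellation in $a_{n+1}^{(m)}=\sum_{k=0}^{n}b_{k}^{(m)}b_{n-k}^{(m)}$ relative to $\sum_{k=0}^{n}|b_{k}^{(m)}|^{2}$ --- the cancellation which makes the Lebedev-Milin bound used in Theorem 1 lose a factor $\sqrt{\alpha_{m}}$ --- persists uniformly along the family; I expect strong Dirichlet convergence of $(\Phi_{m})$, rather than mere boundedness of the deviation energies, to be exactly what supplies this. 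Once (i)--(iii) are verified, Lemma 1 finishes the proof.
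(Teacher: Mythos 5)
There is a genuine gap, and you have located it yourself: hypothesis (ii) of Lemma 1 for your functions $\Psi_{m}(r)=(1-r)^{2}\sum_{k}|a_{k}^{(m)}|r^{k}$ is never established. Everything else in your architecture is sound --- the rotation normalization, the verification of (i) via Hayman regularity and an Abelian estimate, the verification of (iii) via the bound on successive coefficients, and the observation that the Ces\`aro means of the telescoping partial sums are exactly $|a_{n}^{(m)}|/(n+1)$ --- but the uniform convergence of $(\Psi_{m})$ on the closed interval $[0,1]$ is, as you concede, essentially the uniform coefficient asymptotic $|a_{k}^{(m)}|\sim\alpha_{m}k$ that the theorem asserts. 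Your Lemma 2 cannot rescue you here because $\Psi_{m}$ is not known to be monotone (unlike $(1-r)^{2}r^{-1}M_{\infty}(r,f_{m})$ or $\mathrm{Re}\,\Phi_{m}(r)$), and your proposed bridge --- strong convergence of $\Phi_{m}$ in the Dirichlet norm forcing the ``principal part'' of $a_{n}^{(m)}$ to be $\sim\alpha_{m}n$ with a uniformly $o(n)$ remainder --- is exactly the uniform version of Milin's Tauberian argument, stated as an expectation rather than proved. So the reduction is circular at its decisive point. (Your use of Lemma 3 together with Bazilevich's inequality and a Fatou/weak-convergence squeeze to get $\sum_{k}k|\gamma_{k}^{(m)}-k^{-1}|^{2}\rightarrow-\frac{1}{2}\log\alpha$ is correct and is precisely the role Lemma 3 plays in the paper; the problem is only what you do, or rather do not do, with it afterwards.)

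The paper closes this gap by applying Lemma 1 to a different family, namely $h_{m}(z)=\sum_{n}n|\gamma_{n}^{(m)}-n^{-1}|^{2}z^{n}$, for which monotonicity of $-h_{m}$ on $[0,1]$ and the endpoint convergence $h_{m}(1)\rightarrow h(1)=-\frac{1}{2}\log\alpha$ (this is where Lemma 3 enters) let Lemma 2 deliver the required uniform convergence. Lemma 1 then yields the quantitative estimate $\frac{1}{n+1}\sum_{k=1}^{n}k^{2}|\gamma_{k}^{(m)}-k^{-1}|^{2}<2\epsilon$ for $m,n$ large, and this is converted into the coefficient statement by an explicit Tauberian computation in the style of Tauber's second theorem applied to $F_{m}(r)=(1-r)^{2}r^{-1}f_{m}(r)$: the quantities $\delta_{n}^{(m)}=(n+1)^{-1}\sum_{k=1}^{n}kb_{k}^{(m)}$ are controlled by Cauchy--Schwarz against the displayed estimate and the uniform bound on $|a_{k+1}^{(m)}-a_{k}^{(m)}|$, and the Ces\`aro means $\sigma_{n}^{(m)}=a_{n+1}^{(m)}/(n+1)$ are then compared with $F_{m}(1-n^{-1})$, whose modulus converges uniformly to $\alpha_{m}$. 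If you want to salvage your plan, you would need to supply this (or an equivalent) quantitative Tauberian step; without it the proof is incomplete.
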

\begin{proof}
Let $0<\epsilon<\alpha$ be given arbitrarily and suppose that the
limit function $f\in S$ is given by $f(z)=z+\sum_{n=2}^{\infty}a_{n}z^{n}$,
$z\in\mathbb{D}$. Since the limit function $f\in S$ was supposed
to be of maximal growth type, by (ii) there exists a constant $N_{1}(\epsilon)\in\mathbb{N}$
such that 
\begin{equation}
0<\alpha-\epsilon<\alpha_{m}<\alpha+\epsilon
\end{equation}
whenever $m>N_{1}(\epsilon)$. Hence, without loss in generality,
it may be supposed, that each $f_{m}$, $m\in\mathbb{N}$ has Hayman-index
$\alpha_{m}>0$ and radius of greatest growth in the direction of
the positive real axis. For each $m\in\mathbb{N}$ define $h_{m}:\mathbb{D}\rightarrow\mathbb{C}$
by $h_{m}(z)=\sum_{n=1}^{\infty}n|\gamma_{n}^{(m)}-n^{-1}|^{2}z^{n}$
and $g,g_{m}:\mathbb{D}\rightarrow\mathbb{C}$ by $g_{m}(z)=\mathit{log}\{z^{-1}f_{m}(z)\}=2\sum_{n=1}^{\infty}\gamma_{n}^{(m)}z^{n}$
and $g(z)=\mathit{log}\{z^{-1}f(z)\}=2\sum_{n=1}^{\infty}\gamma_{n}z^{n}$
respectively. Here $\gamma_{n}^{(m)}$, $n\in\mathbb{N}$ and $\gamma_{n}$,
$n\in\mathbb{N}$ denote the logarithmic coefficients of $f_{m}\in S$
and $f\in S$ respectively. The strategy of proof is to first apply
Lemma 1 to the sequence $(h_{m})$. In order to do that observe that
by (32) and by Bazilevich's theorem(\cite{1}, Theorem 5.5) the family
$\{h_{m}\}$ is uniformly bounded in $\overline{\mathbb{D}}$ and
therefore there exists a subsequence $(h_{m(k)})$ and an analytic
function $h:\mathbb{D}\rightarrow\mathbb{C}$ such that $h_{m(k)}\rightarrow h$
locally uniformly in $\mathbb{D}$ as $k\rightarrow\infty$. Let $h(z)=\sum_{k=1}^{\infty}\beta_{k}z^{k}$
and consider an arbitrarily chosen coefficient $\beta_{n}$, $n\in\mathbb{N}$.
Then $n|\gamma_{n}^{(m(k))}-n^{-1}|^{2}\rightarrow\beta_{n}$ as $k\rightarrow\infty$
because $h_{m(k)}\rightarrow h$ locally uniformly in $\mathbb{D}$
as $k\rightarrow\infty$. It is well-known that the coefficients $\gamma_{n}^{(m)}$,
$m\in\mathbb{N}$ satisfy the equations $a_{n}^{(m)}=n\gamma_{n}^{(m)}+\sum_{k=1}^{n-1}k\gamma_{k}^{(m)}a_{n-k+1}^{(m)}$
for $m\in\mathbb{N}$. By hypothesis (i) $a_{n}^{(m)}\rightarrow a_{n}$
as $m\rightarrow\infty$for each $n\in\mathbb{N}$ and inductively(by
$n$) it follows that 
\[
\underset{m\rightarrow\infty}{\mathit{lim}}\gamma_{n}^{(m)}=\underset{m\rightarrow\infty}{\mathit{lim}}\frac{1}{n}\left(a_{n}^{(m)}-\sum_{k=1}^{n-1}k\gamma_{k}^{(m)}a_{n-k+1}^{(m)}\right)=\frac{1}{n}\left(a_{n}-\sum_{k=1}^{n-1}k\gamma_{k}a_{n-k+1}\right)=\gamma_{n}.
\]
Consequently also $n|\gamma_{n}^{(m(k))}-n^{-1}|^{2}\rightarrow n|\gamma_{n}-n^{-1}|^{2}$
as $k\rightarrow\infty$ and by the identity theorem $\beta_{n}=n|\gamma_{n}-n^{-1}|^{2}$
for each $n\in\mathbb{N}$. Because the subsequence $(h_{m(k)})$
was chosen arbitrarily each subsequence $(h_{m(k)})$ of the sequence
$(h_{m})$ will converge locally uniformly in $\mathbb{D}$ to $h$
as $k\rightarrow\infty$ and by a result of Montel(\cite{7}, Theorem
2.4.2) the whole sequence $(h_{m})$ converges to the limit-function
$h$ locally uniformly in $\mathbb{D}$ as $m\rightarrow\infty$.
Clearly $-h$ and $-h_{m}$, $m\in\mathbb{N}$ are non-increasing
in $[0,1]$ and by Bazilevich's theorem and Abel's limit theorem $-h$
and $-h_{m}$, $m\in\mathbb{N}$ are continuous in $[0,1]$. In order
to show that $h_{m}(1)\rightarrow h(1)$ as $m\rightarrow\infty$
consider an arbitrarily chosen accumulation point of the sequence
$(h_{m}(1))$, that is consider an arbitrarily chosen subsequence
$(h_{m(k)}(1)$) such that $h_{m(k)}(1)\rightarrow\lambda$ as $k\rightarrow\infty$
for some $\lambda\geq0$. Then on the one hand by Lemma 2 and hypothesis
(ii)
\begin{equation}
\underset{r\rightarrow1-}{\mathit{lim}}\{-h(r)\}\geq\underset{k\rightarrow\infty}{\mathit{lim}}\{-h_{m(k)}(1)\}\geq\underset{k\rightarrow\infty}{\mathit{lim}}\frac{1}{2}\mathit{log}\alpha_{m(k)}=\frac{1}{2}\mathit{log}\alpha
\end{equation}
and on the other hand $h(1)=-(1/2)\mathit{log}\alpha$ by Lemma 3
and Abel's limit theorem. Consequently equality holds in (33) for
each convergent subsequence $(h_{m(k)}(1))$ of the sequence $(h_{m}(1))$
and therefore $h_{m}(1)\rightarrow h(1)$ as $m\rightarrow\infty$.
Now Lemma 2 can be applied and implies that $h_{m}\rightarrow h$
uniformly in $[0,1]$ as $m\rightarrow\infty$ and by Lemma 1 applied
to the functions $h_{m}$ there exists a $N_{2}(\epsilon)\in\mathbb{N}$
such that
\begin{equation}
0\leq-\frac{1}{2}\mathit{log}\alpha_{m}-\sum_{k=1}^{n}k\left|\gamma_{k}^{(m)}-\frac{1}{k}\right|^{2}\leq-\frac{1}{2}\mathit{log}\alpha_{m}-\frac{1}{n+1}\sum_{k=1}^{n}(n+1-k)k\left|\gamma_{k}^{(m)}-\frac{1}{k}\right|^{2}<\epsilon
\end{equation}
if $m,n>N_{2}(\epsilon)$ . By subtracting the two inequalities of
(34) one obtains 
\begin{equation}
0\leq\frac{1}{n+1}\sum_{k=1}^{n}k^{2}\left|\gamma_{k}^{(m)}-\frac{1}{k}\right|^{2}<2\epsilon
\end{equation}
whenever $m,n>N_{1}(\epsilon)$ . So, for $m\in\mathbb{N}$, consider
the functions $g_{m}$, $F_{m}$ defined by $g_{m}(r)+2\mathit{log}(1-r)=\sum_{n=1}^{\infty}\lambda_{n}^{(m)}r^{n}$
where $\lambda_{k}^{(m)}=2(\gamma_{n}^{(m)}-n^{-1})$ and $F_{m}(r)=\mathit{exp\,}(g_{m}(r)+2\mathit{log}(1-r))=r^{-1}(1-r)^{2}f_{m}(r)=\sum_{n=0}^{\infty}b_{n}^{(m)}r^{n}$.
Then $nb_{n}^{(m)}=\sum_{k=1}^{n}\lambda_{k}^{(m)}b_{n-k}^{(m)}$
for $m\in\mathbb{N}$, and inductively it follows that
\begin{equation}
\frac{1}{n+1}\sum_{k=1}^{n}kb_{k}^{(m)}=\frac{1}{n+1}\sum_{j=1}^{n}j\lambda_{j}^{(m)}s_{n-j}^{(m)}
\end{equation}
where
\begin{equation}
s_{k}^{(m)}=\sum_{j=0}^{k}b_{j}^{(m)}=a_{k+1}^{(m)}-a_{k}^{(m)}\qquad m\in\mathbb{N}.
\end{equation}
The Cauchy-Schwarz inequality applied to (36) yields
\begin{equation}
\left|\frac{1}{n+1}\sum_{k=1}^{n}kb_{k}^{(m)}\right|^{2}\leq\left(\frac{4}{n+1}\sum_{k=1}^{n}k^{2}\left|\gamma_{k}^{(m)}-\frac{1}{k}\right|^{2}\right)\left(\frac{1}{n+1}\sum_{k=0}^{n-1}\left|s_{k}^{(m)}\right|^{2}\right).
\end{equation}
But (32) and \cite{1}, Theorem 5.10 imply that $|s_{n}^{(m)}|^{2}\leq(\alpha-\epsilon)^{-1}\mathit{exp\,}(2\delta)$
if $m>N_{1}(\epsilon)$, where $\delta$ denotes Milin's constant.
Hence, if $\delta_{n}^{(m)}$, $m,n\in\mathbb{N}$ is defined by $\delta_{n}^{(m)}=(n+1)^{-1}\sum_{k=1}^{n}kb_{k}^{(m)}$,
(35) and (38) imply that 
\begin{equation}
\left|\delta_{n}^{(m)}\right|=\left|\frac{1}{n+1}\sum_{k=1}^{n}kb_{k}^{(m)}\right|<\sqrt{8\epsilon(\alpha-\epsilon)^{-1}\mathit{exp}\,(2\delta)}
\end{equation}
whenever $m,n>M(\epsilon)$ where $M(\epsilon)=\mathit{max}\{N_{1}(\epsilon),N_{2}(\epsilon)\}$.
From now on the lines of proof essentially follow Tauber's well-known
proof of his second theorem, however adapted to sequences of functions.
Tauber(\cite{8}, p.276 or \cite{9}, chapter 7) obtains the two formulas
\[
\sum_{k=1}^{n}b_{k}^{(m)}=\sum_{k=1}^{n}\left(1+\frac{1}{k}\right)\delta_{k}^{(m)}-\sum_{k=1}^{n}\delta_{k-1}^{(m)}=\left(1+\frac{1}{n}\right)\delta_{n}^{(m)}+\sum_{k=1}^{n-1}\frac{1}{k}\delta_{k}^{(m)}
\]
and
\[
F_{m}(r)-b_{0}^{(m)}=\sum_{n=1}^{\infty}b_{n}^{(m)}r^{n}=\sum_{k=1}^{\infty}\frac{\delta_{k}^{(m)}}{k}r^{k}+(1-r)\sum_{k=1}^{\infty}\delta_{k}^{(m)}r^{k}
\]
which hold if $n\geq2$ and $r\in(0,1)$. The first formula easily
can be verified inductively and the second follows by a straightforward
calculation. By subtracting the last two equations it follows that
\begin{equation}
F_{m}(r)-s_{n}^{(m)}=(1-r)\sum_{k=1}^{\infty}\delta_{k}^{(m)}r^{k}+\sum_{k=n}^{\infty}\frac{\delta_{k}^{(m)}}{k}r^{k}+\sum_{k=1}^{n-1}\frac{\delta_{k}^{(m)}}{k}\left(r^{k}-1\right)-\left(1+\frac{1}{n}\right)\delta_{n}^{(m)}
\end{equation}
which holds if $m\in\mathbb{N}$, $n\geq2$ and if $r\in(0,1)$. The
Cesaro-means $\sigma_{n}^{(m)}$, $m\in\mathbb{N}$, $n\in\mathbb{N}_{0}$
are defined by $\sigma_{n}^{(m)}=(n+1)^{-1}\sum_{k=0}^{n}s_{k}^{(m)}$,
hence $\delta_{n}^{(m)}=s_{n}^{(m)}-\sigma_{n}^{(m)}$ and therefore
(40) can be written in the form
\begin{equation}
F_{m}(r)-\sigma_{n}^{(m)}=(1-r)\sum_{k=1}^{\infty}\delta_{k}^{(m)}r^{k}+\sum_{k=n}^{\infty}\frac{\delta_{k}^{(m)}}{k}r^{k}+\sum_{k=1}^{n-1}\frac{\delta_{k}^{(m)}}{k}\left(r^{k}-1\right)-\frac{\delta_{n}^{(m)}}{n}
\end{equation}
whenever $n\geq2$ and $r\in(0,1)$. In order to simplify notation
let $\varepsilon=\sqrt{8\epsilon(\alpha-\epsilon)^{-1}\mathit{exp}\,(2\delta)}$
and observe that by (39), there exists a constant $L>0$ such that
$|\delta_{n}^{(m)}|<L$ if $m,n\in\mathbb{N}$. Then, since $|r^{k}-1|\leq k(1-r)$
for $k\in\mathbb{N}$, (39) and (41) yield
\begin{eqnarray}
\left|F_{m}(r)-\sigma_{n}^{(m)}\right| & \leq & 2(1-r)\sum_{k=1}^{M(\epsilon)}\left|\delta_{k}^{(m)}\right|r^{k}+2(1-r)\sum_{k=M(\epsilon)+1}^{\infty}\left|\delta_{k}^{(m)}\right|r^{k}+\sum_{k=n}^{\infty}\frac{\left|\delta_{k}^{(m)}\right|}{k}r^{k}+\frac{\left|\delta_{n}^{(m)}\right|}{n}\nonumber \\
 & < & 2(1-r)M(\epsilon)L+2\varepsilon+\frac{1}{1-r}\frac{\varepsilon}{n}+\frac{\varepsilon}{n}
\end{eqnarray}
if $m,n>\mathit{M(\epsilon)}$ and $r\in(0,1)$. Now let $F:[0,1]\rightarrow\mathbb{R}$
be defined by $F(r)=(1-r)^{2}r^{-1}f(r)$ and remember that $F_{m}(r)=(1-r)^{2}r^{-1}f_{m}(r)$
for $m\in\mathbb{N}$. Then hypothesis (ii) and Lemma 2 imply that
the sequence $(|F_{m}|)$ converges uniformly in $[0,1]$ to $|F|$
as $m\rightarrow\infty$, that is there exists a $N_{3}(\epsilon)\in\mathbb{N}$
such that $\left||F_{m}(r)|-|F(r)|\right|<\epsilon$ whenever $m>N_{3}(\epsilon)$.
But since $|F|$ is continuous in $[0,1]$ there exists a $\rho>0$
such that $\left|\left|F(r)\right|-\alpha\right||<\epsilon$ if $1-r<\rho$.
By (32) it follows that $\left|\alpha-\alpha_{m}\right|<\epsilon$
if $m>N_{1}(\epsilon)$ and hence
\begin{equation}
\left||F_{m}(r)|-\alpha_{m}\right|\leq\left||F_{m}(r)|-|F(r)|\right|+\left|\left|F(r)\right|-\alpha\right|+\left|\alpha-\alpha_{m}\right|<3\epsilon
\end{equation}
whenever $m>\mathit{max}\{N_{1}(\epsilon),N_{3}(\epsilon)\}$ and
$1-r<\rho$. Finally, in order to complete the proof, let $r=1-n^{-1}$
and observe that there exists a $N_{4}(\epsilon)\in\mathbb{N}$ such
that $1-r=n^{-1}<\rho$ if $n>N_{4}(\epsilon)$. Similarly, if $1-r=n^{-1}$,
there exists a $N_{5}(\epsilon)\in\mathbb{N}$ such that 
\begin{equation}
(1-r)M(\epsilon)L=\frac{M(\epsilon)L}{n}<\epsilon
\end{equation}
if $n>N_{5}(\epsilon)$. Observe that by (37) $\sigma_{n}^{(m)}=(n+1)^{-1}\sum_{k=0}^{n}s_{k}^{(m)}=(n+1)^{-1}a_{n+1}^{(m)}$
for $m,n\in\mathbb{N}$ and that therefore 
\[
\left|\alpha_{m}-\left|\frac{a_{n+1}^{(m)}}{n+1}\right|\right|=\left|\alpha_{m}-\left|\sigma_{n}^{(m)}\right|\right|\leq\left|\left|F_{m}(1-\frac{1}{n})\right|-\left|\sigma_{n}^{(m)}\right|\right|+\left|\alpha_{m}-\left|F_{m}(1-\frac{1}{n})\right|\right|<(2\epsilon+4\varepsilon)+3\epsilon
\]
by (42), (43) and (44) whenever $m,n>\mathit{max}\{N_{1}(\epsilon),N_{2}(\epsilon),N_{3}(\epsilon),N_{4}(\epsilon),N_{5}(\epsilon)\}$.
This completes the proof.
\end{proof}
Several questions arise, the most interesting perhaps whether Theorem
2 remains true if the function $g:\Delta\rightarrow\mathbb{C}$ defined
by $g(z)=f(z^{-1})^{-1}$ is not a full mapping(where $f\in S$ denotes
the limit function of Theorem 2). The proof of Theorem 2 suggests
that this might not be true since for this class of functions strict
inequality holds in Basilevich's theorem. The observations made so
far suggest the following definition of an approximation measure for
schlicht functions.
\begin{defn}
A schlicht function $f\in S$ will be called not badly approximable
if for any sequence $(f_{n})$, $f_{n}\in S$ such that $f_{n}\rightarrow f$
locally uniformly in $\mathbb{D}$ and $\alpha(f_{n})\rightarrow\alpha(f)$
as $n\rightarrow\infty$ and for any $\epsilon>0$ there exists a
number $N\in\mathbb{N}$(dependent only on $\epsilon$ and the sequence
$(f_{n})$) so that $\left|k^{-1}a_{k}^{(n)}-\alpha(f)\right|<\epsilon$
whenever $k,n>N$. Here it is assumed that $f_{n}(z)=\sum_{k=1}^{\infty}a_{k}^{(n)}z^{k}$
for $z\in\mathbb{D}$ and $n\in\mathbb{N}$.
\end{defn}
In the terminology of Definition 1 every schlicht function $f\in S$
of slow growth type is not badly approximable by Theorem 1. By Theorem
2 every schlicht function $f\in S$ whose associated inverted function
$g:\Delta\rightarrow\mathbb{C}$ defined by $g(z)=f(z^{-1})^{-1}$
is a full mapping is not badly approximable either. The full-mapping
property and boundedness of the image regions for instance are geometric
properties of the image regions of schlicht functions and so Theorems
1 and 2 also allow for a geometric interpretation.\\
The use of approximation measures has a long-standing tradition in
the theory of diophantine approximation and the current paper was
actually inspired by that. To mention just one of the approximation
measures of diophantine approximation, an irrational number $\lambda$
is called badly approximable if and only if there is a constant $c=c(\lambda)>0$
such that $|\lambda-p/q|>c/q^{2}$ for every rational number $p/q$(see
\cite{10}, chapter I.5). There are continuum many badly approximable
irrationals and continuum many not badly approximable irrationals
and this particular approximation measure has a close connection to
the continued fraction expansion of an irrational. If there were badly
approximable functions the topology and distribution of power series
would show phenomena similar to the topology and distribution of numbers.
However, regardless whether there are badly approximable schlicht
functions or not Theorems 1 and 2 provide a refined picture of the
topology and distribution of schlicht functions as their application
to asymptotic extremal problems(examples 1 and 2) shows.

\end{document}